\documentclass[12pt,a4paper]{article}
\usepackage[utf8]{inputenc}
\usepackage[english]{babel}
\usepackage{amsmath}
\usepackage{amsfonts}
\usepackage{amssymb}
\usepackage{color}

\usepackage{comment}

\usepackage[left=2cm,right=2cm,top=2cm,bottom=2cm]{geometry}

%\thanks{...}
%

%\thanks{bla }

\title{On the Existence of Solution of the Boundary-Domain Integral Equation System derived from the 2D  Dirichlet Problem for the Diffusion Equation with Variable Coefficient}
\author{C.F. Portillo, Z.W. Woldemicheal}
\date{This research was supported with the London Mathematical Society - scheme 5 Collaboration with Developing Countries, grant 51809}

%\subjclass{bla}
\bibliographystyle{unsrt}
%in Bounded Domains

\newenvironment{proof}{\paragraph{Proof:}}{\hfill$\square$}

\newtheorem{theorem}{Theorem}[section]

\newtheorem{lemma}[theorem]{Lemma}
\newtheorem{corollary}[theorem]{Corollary}

\numberwithin{equation}{section}

\allowdisplaybreaks
\begin{document}
\maketitle

\begin{abstract}
A system of boundary-domain integral equations is derived from the bidimensional Dirichlet problem for the diffusion equation with variable coefficient using the novel parametrix from \cite{carlos2} different from the one in \cite{mikhailov1, localised}. Mapping properties of the surface and volume parametrix based potential-type operators are analysed. Invertibility of the single layer potential is also studied in detail in appropriate Sobolev spaces. We show that the system of boundary-domain integral equations derived is equivalent to the Dirichlet problem prescribed and we prove the existence and uniqueness of solution in suitable Sobolev spaces of the system obtained by using arguments of compactness and Fredholm Alternative theory.  
\end{abstract}

\paragraph{Keywords}
Variable coefficient, parametrix, Dirichlet boundary value problem, boundary-domain integral equations, single layer potential.

\section{Introduction}
Boundary Domain Integral Equation Systems (BDIES) are often derived from a wide class of boundary value problems with variable coefficient in domains with smooth or Lipschitz boundary: cf. \cite{mikhailov1} for a scalar mixed elliptic BVP in bounded domains with smooth boundary; cf. \cite{exterior} for the corresponding problem in unbounded domains with smooth boundary; cf. \cite{carlos3} for the mixed problem in Lipschitz domains. Nevertheless, most of these results only concern three dimensional problems and thus the theoretical work concerning the derivation of BDIES for two dimensional boundary value problems is still being developed.  

Let us note that Boundary Domain Integral Equations (BDIEs) represent a generalisation of the Boundary Integral Equations (BIEs) which are popular due to the reduction of dimension from the domain in which the BVP is defined to its boundary. However, this reduction in dimension only applies for homogeneous BVPs with constant coefficients. As soon as we include variable coefficients or not homogeneous problems the integral equations are defined not only in the boundary but also in the domain of the BVP. Still, one can transform domain integrals into boundary integrals in order to preserve the reduction of dimension using the methods shown in \cite{RIM}. This method is also able to remove various singularities appearing in the domain integrals. 

Also, reformulating the original BVP in the Boundary Domain Integral Equation form can be beneficial, for instance, in inverse problems with variable coefficients, see\cite{chapko}. 

 In order to obtain BIEs, a fundamental solution is required which is not usually explicitly available for problems with variable coefficient and therefore the concept of parametrix is introduced, see \cite{mikhailov1}. A parametrix (see formula \eqref{parametrixdef}) preserves a strong relationship with the corresponding fundamental solution of the analogous BVP with constant coefficient. Using this relationship, it is possible to derive further relations between the surface and volume potential type operators of the variable coefficient case with their counterparts from the constant coefficient case, see, e.g. \cite[Formulae (3.10)-(3.13)]{mikhailov1}, \cite[Formulae (4.6)-(4.11)]{carlos1}. 

A parametrix is not unique. For example, a family of weakly singular parametrices of the form $P^{y}(x,y)$ for the particular operator $\mathcal{A}$ from the diffusion equation in non-homogeneous media
\[\mathcal{A}u(x) := \sum_{i=1}^{3}\dfrac{\partial}{\partial x_{i}}\left(a(x)\dfrac{\partial u(x)}{\partial x_{i}}\right), \,\, P^{y}(x,y)= P(x,y;a(y))=\dfrac{-1}{4\pi a(y)|x-y|}, x, y \in \mathbb{R}^{3}.\]
 has been extensively studied for the 3D case in \cite{mikhailov1,mikhailovlipschitz, exterior}. Note that the superscript in $P^{y}(x,y)$ means that $P^{y}(x,y)$ is a function of the variable coefficient depending on $y$. In this case, the operator $\mathcal{A}$ differentiates with respect to $x$ and the parametrix includes the variable coefficient with respect to $y$. 
 
There is some preliminary results for the analogous operator $\mathcal{A}$ in two dimensions, see \cite{mixed2d, neumann2d, dufera}. In this case, the operator $\mathcal{A}$ and parametrix $P^{y}(x,y)$ reads
\begin{align}
\mathcal{A}u(x) &:= \sum_{i=1}^{2}\dfrac{\partial}{\partial x_{i}}\left(a(x)\dfrac{\partial u(x)}{\partial x_{i}}\right)\label{op1A}, \\ P^{y}(x,y)&= P(x,y;a(y)):=\dfrac{1}{2\pi a(y)}\mathrm{log}|x-y|,\,\, x, y \in \mathbb{R}^{2}\label{pary2d}
\end{align}

In \cite{2dnumerics}, the parametrix \ref{pary2d} has been employed to solve the Dirichlet problem operator \ref{op1A} in 2D. Furthermore, the authors in \cite{2dnumerics} highlight that there is not much research in the literature related to numerical solution of boundary-domain integral equations in 2D obtained by the method presented in this paper. They show that it is possible to obtain linear convergence with respect to the number of quadrature curves, and in some cases, exponential convergence. Moreover, there is analogous research in 3D which succesfully implemented fast algorithms in 3D to obtain the solution of boundary domain integral equations, see \cite{ravnik, numerics, sladek}. Therefore, we believe this method brings new techniques to solve inverse boundary value problems with variable coefficients that can be computationally implemented in an efficient fashion.

 \textit{In this paper,} we explore the family of parametrices for the operator $\mathcal{A}$ of the form $$P^{x}(x,y)= P(x,y;a(x))=\dfrac{1}{2\pi a(x)}\mathrm{log}|x-y|.$$
 which can be useful at the time of studying BDIES derived from a BVP with a system of PDEs with variable coefficient as illustrated in \cite[Section 1]{carlos2}. In particular, the work presented in this paper, will provide a method to obtain an equivalent system of BDIEs even when the single layer potential is not invertible. Although, there is some preliminary work related to BDIEs in two dimensional domains, see \cite{dufera}, this only relates to the family of parametrices $P^{y}(x,y)$ and therefore, the corresponding analysis for the family $P^{x}(x,y)$ in two dimensions is a problem that remains open, and thus is the main purpose of this paper. This study aims to continue the work in \cite{dufera, carlos3} and will motivate the study of BDIEs for the Stokes system in 2D.

In order to study the possible numerical advantages of 
the new family of parametrices of the form $P^{x}(x,y;a(x))$ with respect to the parametrices already studied, it is necessary to prove the unique-solvency of an analogous BDIES derived with this new family of parametrices which has not yet been done for the bidimensional Dirichlet problem for the diffusion equation with variable coefficient.  

The theoretical study of parametrices which include variable coefficient with different variables is helpful at the time of deriving BDIES for boundary value problems for systems of PDEs. For example, the parametrix for the Stokes system involves the variable viscosity coefficient with respect to $x$ and also with respect to $y$, see \cite{carlos1}. 

The main differences between the different families of parametrices are the relations between the parametrix-based potentials with their counterparts for the constant coefficient case.  Notwithstanding, the same mapping properties in Sobolev-Bessel potential spaces still hold allowing us to prove the equivalence between the BDIES and the BVP. 

An analysis of the uniqueness of the BDIES is performed by studying the Fredholm properties of the matrix operator which defines the system.

\section{Preliminaries and the BVP}
Let $\Omega=\Omega^{+}$ be a bounded simply connected domain, $\Omega^{-}:=\mathbb{R}^{2}\smallsetminus\bar{\Omega}^{+}$ the complementary (unbounded) subset of $\Omega$. The boundary $S:=\partial\Omega$ is simply connected, closed and infinitely differentiable, $S\in\mathcal{C}^{\infty}$.  

Let us introduce the following partial differential equation with variable smooth positive coefficient $a(x)\in \mathcal{C}^{\infty}(\overline{\Omega})$:
\begin{equation}\label{ch4operatorA}
\mathcal{A}u(x):=\sum_{i=1}^{2}\dfrac{\partial}{\partial x_{i}}\left(a(x)\dfrac{\partial u(x)}{\partial x_{i}}\right)=f(x),\,\,x\in \Omega,
\end{equation}
 where $u(x)$ is an unknown function and $f$ is a given function on $\Omega$. It is easy to see that if $a\equiv 1$ then, the operator $\mathcal{A}$ becomes $\Delta$, the Laplace operator.
 
 We will use the following function spaces in this paper (see e.g. \cite{mclean, hsiao} for more details). Let $\mathcal{D}'(\Omega)$ be the
Schwartz distribution space; $H^{s}(\Omega)$ and $H^{s}(S)$ with $s\in \mathbb{R}$,
the Bessel potential spaces; the space $H^{s}_{K}(\mathbb{R}^{2})$ consisting of all the distributions of $H^{s}(\mathbb{R}^{2})$ whose support is inside of a compact set $K\subset \mathbb{R}^{2}$; the spaces consisting of distributions in $H^{s}(K)$ for every compact $K\subset \overline{\Omega^{-}},\hspace{0.1em}s\in\mathbb{R}$. We denote $\widetilde{H}^{s}(\Omega)$ the subspace of $H^{s}(\mathbb{R}^{2}),$ 
 $\widetilde{H}^{s}(\Omega)=\lbrace g\in H^{s}(\mathbb{R}^{2}): {\rm supp}(g)\subset \overline{\Omega}\rbrace$.  
 
We will make use of the space, see e.g. \cite{costabel, mikhailov1},
\begin{center}
 $H^{1,0}(\Omega; \mathcal{A}):= \lbrace u \in H^{1}(\Omega): \mathcal{A}u\in L^{2}(\Omega)\rbrace $
 \end{center} 
 which is a Hilbert space with the norm defined by
\begin{center}
$\parallel u \parallel^{2}_{H^{1,0}(\Omega; \mathcal{A})}:=\parallel u \parallel^{2}_{H^{1}(\Omega)}+\parallel \mathcal{A}u  \parallel^{2}_{L^{2}(\Omega)}$.
\end{center}

For a scalar function $w\in H^{s}(\Omega^\pm)$, $s>1/2$, the trace operator $\gamma^{\pm}(\,\cdot \,):=\gamma_{S}^\pm(\,\cdot\,)$, acting on $w$ is well defined and $\gamma^{\pm}w\in H^{s-\frac{1}{2}}(S)$ (see, e.g., \cite{mclean, traces}). 
For $u\in H^{s}(\Omega)$, $s>3/2$, we can define on $S$ the  conormal derivative operator, $T^{\pm}$, in the classical (trace) sense
\begin{equation*}\label{ch4conormal}
T^{\pm}_{x}u := \sum_{i=1}^{2}a(x)\gamma^{\pm}\left( \dfrac{\partial u}{\partial x_{i}}\right)^{\pm}n_{i}^{\pm}(x),
\end{equation*}
where $n^{+}(x)$ is the exterior unit normal vector directed \textit{outwards} the interior domain $\Omega$ at a point $x\in S$. Similarly, $n^{-}(x)$ is the unit normal vector directed \textit{inwards} the interior domain $\Omega$ at a point $x\in S$. 

Furthermore, we will use the notation $T^{\pm}_{x}u$ or $T^{\pm}_{y}u$ to emphasise which respect to which variable we are differentiating. When the variable of differentiation is obvious or is a dummy variable, we will simply use the notation $T^{\pm}u$.

Moreover, for any function $u\in H^{1,0}(\Omega; \mathcal{A})$, the \textit{canonical} conormal derivative $T^{\pm}u\in H^{-\frac{1}{2}}(\Omega)$, is well defined, cf. \cite{costabel,mclean,traces},
\begin{equation}\label{ch4green1}
\langle T^{\pm}u, w\rangle_{S}:= \pm \int_{\Omega^{\pm}}[(\gamma^{-1}\omega)\mathcal{A}u +E(u,\gamma^{-1}w)] dx,\,\, w\in H^{\frac{1}{2}}(S),
\end{equation}
where $\gamma^{-1}: H^{\frac{1}{2}}(S)\longrightarrow H_{K}^{1}(\mathbb{R}^{2})$ is a continuous right inverse to the trace operator whereas the function $E$ is defined as
\begin{equation*}\label{ch4functionalE}
E(u,v)(x):=\sum_{i=1}^{2}a(x)\dfrac{\partial u(x)}{\partial x_{i}}\dfrac{\partial v(x)}{\partial x_{i}},
\end{equation*}
and $\langle\, \cdot \, , \, \cdot \, \rangle_{S}$ represents the $L^{2}-$based dual form on $S$.

We aim to derive boundary-domain integral equation systems for the following \textit{Dirichlet} boundary value problem. Given $f\in L^{2}(\Omega)$ and $\varphi_{0}\in H^{\frac{1}{2}}(\partial\Omega)$, we seek a function $u\in H^{1}(\Omega)$ such that 
\begin{subequations}\label{ch4BVP}
\begin{align}
\mathcal{A}u&=f,\hspace{1em}\text{in}\hspace{1em}\Omega\label{ch4BVP1};\\
\gamma^{+}u &= \varphi_{0},\hspace{1em}\text{on}\hspace{1em} \partial\Omega\label{ch4BVP2}
\end{align}
where equation \eqref{ch4BVP1} is understood in the weak sense, the Dirichlet condition \eqref{ch4BVP2} is understood in the trace sense.
\end{subequations}

By Lemma 3.4 of \cite{costabel} (cf. also Theorem 3.9 in \cite{traces}), the first Green identity holds for any $u\in H^{1,0}(\Omega; \mathcal{A})$ and $v\in H^{1}(\Omega)$,
\begin{equation}\label{ch4green1.1}
\langle T^{\pm}u, \gamma^{+}v\rangle_{S}:= \pm \int_{\Omega}[v\mathcal{A}u +E(u,v)] dx.
\end{equation}
The following assertion is well known and can be proved, e.g., using the Lax-Milgram lemma as in \cite[Chapter 4]{steinbach}. 
\begin{theorem}\label{ch4Thomsol}
The boundary value problem \eqref{ch4BVP} has one and only one  solution.
\end{theorem}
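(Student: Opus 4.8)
The plan is to prove existence and uniqueness of the solution to the Dirichlet problem \eqref{ch4BVP} by means of the Lax--Milgram lemma, exactly as indicated in the hint referencing \cite[Chapter 4]{steinbach}. First I would reduce to a homogeneous boundary condition: since the trace operator $\gamma^{+}\colon H^{1}(\Omega)\to H^{1/2}(S)$ is surjective with a bounded right inverse $\gamma^{-1}$, I would set $u_{0}:=\gamma^{-1}\varphi_{0}\in H^{1}(\Omega)$ and look for $\tilde u:=u-u_{0}$ in the space $\widetilde{H}^{1}_{0}(\Omega):=\{v\in H^{1}(\Omega):\gamma^{+}v=0\}$. The original problem is then equivalent to finding $\tilde u$ with $\mathcal{A}\tilde u = f-\mathcal{A}u_{0}$ in the weak sense and $\gamma^{+}\tilde u=0$, so it suffices to treat the case $\varphi_{0}=0$ with a modified right-hand side.

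Next I would set up the weak (variational) formulation. Testing the equation $\mathcal{A}u=f$ against $v\in H^{1}_{0}(\Omega)$ and using the first Green identity \eqref{ch4green1.1} (whose boundary term $\langle T^{\pm}u,\gamma^{+}v\rangle_{S}$ vanishes because $\gamma^{+}v=0$), I obtain the bilinear form
\begin{equation*}
\mathcal{B}(u,v):=\int_{\Omega}E(u,v)\,dx=\sum_{i=1}^{2}\int_{\Omega}a(x)\frac{\partial u}{\partial x_{i}}\frac{\partial v}{\partial x_{i}}\,dx,
\end{equation*}
together with the linear functional $F(v):=-\int_{\Omega}f\,v\,dx$. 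The weak problem is: find $\tilde u\in H^{1}_{0}(\Omega)$ such that $\mathcal{B}(\tilde u,v)=F(v)$ for all $v\in H^{1}_{0}(\Omega)$. I would then verify the hypotheses of Lax--Milgram on the Hilbert space $H^{1}_{0}(\Omega)$: \emph{boundedness} of $\mathcal{B}$ follows from the Cauchy--Schwarz inequality and the boundedness of $a$ on $\overline{\Omega}$ (since $a\in\mathcal{C}^{\infty}(\overline{\Omega})$ on a bounded domain, $a$ attains a finite maximum); \emph{boundedness} of $F$ follows from Cauchy--Schwarz and $f\in L^{2}(\Omega)$. The crux is \emph{coercivity}: using that $a$ is positive and continuous on the compact set $\overline{\Omega}$, there is a constant $a_{\min}:=\min_{\overline{\Omega}}a>0$, so $\mathcal{B}(v,v)\ge a_{\min}\|\nabla v\|_{L^{2}(\Omega)}^{2}$, and then the Poincar\'e--Friedrichs inequality on $H^{1}_{0}(\Omega)$ upgrades the gradient seminorm to the full $H^{1}$-norm, giving $\mathcal{B}(v,v)\ge C\|v\|_{H^{1}(\Omega)}^{2}$.

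By the Lax--Milgram lemma there exists a unique $\tilde u\in H^{1}_{0}(\Omega)$ solving the variational problem, and hence a unique $u=\tilde u+u_{0}\in H^{1}(\Omega)$ solving \eqref{ch4BVP}; uniqueness for the original problem follows because the difference of two solutions lies in $H^{1}_{0}(\Omega)$ and satisfies $\mathcal{B}(w,v)=0$ for all $v$, forcing $w=0$ by coercivity. I expect the main obstacle to be purely technical rather than conceptual: namely, justifying that the variational solution $\tilde u$ is genuinely a weak solution of the PDE in the sense of \eqref{ch4BVP1} with $f\in L^{2}(\Omega)$, which requires confirming $u\in H^{1,0}(\Omega;\mathcal{A})$ so that the canonical conormal derivative and Green identity \eqref{ch4green1.1} apply, and checking that the reduction to homogeneous data does not spoil membership in $L^{2}(\Omega)$ of the modified right-hand side. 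Since this is a standard elliptic existence result, I would keep these verifications brief and lean on the cited references \cite{costabel,steinbach,traces}.
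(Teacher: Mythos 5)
Your proposal is correct and follows exactly the route the paper itself indicates: the paper gives no written proof of Theorem \ref{ch4Thomsol}, stating only that it ``can be proved, e.g., using the Lax--Milgram lemma as in \cite[Chapter 4]{steinbach},'' which is precisely the reduction-to-homogeneous-data plus coercivity-via-Poincar\'e argument you carry out. The technical points you flag (the modified right-hand side being handled at the level of the bounded linear functional rather than in $L^{2}$, and recovering $u\in H^{1,0}(\Omega;\mathcal{A})$ a posteriori from $\mathcal{A}u=f\in L^{2}(\Omega)$) are standard and resolve as you expect.
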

 
\section{Parametrices and remainders}
 We define a parametrix (Levi function) $P(x,y)$ for a differential operator $\mathcal{A}_{x}$ differentiating with respect to $x$ as a function on two variables that satisfies
 \begin{equation}\label{parametrixdef}
 \mathcal{A}_{x}P(x,y) = \delta(x-y)+R(x,y).
 \end{equation}
 where $\delta (.)$ is a Dirac-delta distribution, while $R(x,y)$ is a remainder possessing at most a weak (integrable) singularity at $ x=y $.
 
For a given operator $\mathcal{A}$, the parametrix is not unique. For example, the parametrix 
\begin{equation*}\label{ch4P2002}
P^y(x,y)=\dfrac{1}{a(y)} P_\Delta(x-y),\hspace{1em}x,y \in \mathbb{R}^{2},
\end{equation*} 
was employed in \cite{localised, mikhailov1}, for the operator $\mathcal{A}$ defined in \eqref{ch4operatorA},  where
\begin{equation*}\label{ch4fundsol}
P_{\Delta}(x-y) = \dfrac{1}{2\pi}\mathrm{log}|x-y|
\end{equation*}
is the fundamental solution of the Laplace operator.
The remainder corresponding to the parametrix $P^{y}$ is 
\begin{equation*}
\label{ch43.4} R^y(x,y)
=\sum\limits_{i=1}^{2}\frac{1}{a(y)}\, \frac{\partial a(x)}{\partial x_i} \frac{\partial }{\partial x_i}P_\Delta(x-y)
%=\sum\limits_{i=1}^{3}\frac{x_i-y_i}{4\pi \,a(y)\,|x-y|^3}\, \frac{\partial a(x)}{\partial x_i}
\,,\;\;\;x,y\in {\mathbb R}^2.
\end{equation*}
{\em In this paper}, for the same operator $\mathcal{A}$ defined in \eqref{ch4operatorA}, we will use another parametrix,  
\begin{align}\label{ch4Px}
P(x,y):=P^x(x,y)=\dfrac{1}{a(x)} P_\Delta(x-y),\hspace{1em}x,y \in \mathbb{R}^{2},
\end{align}
which leads to the corresponding remainder 
\begin{align*}\label{ch4remainder}
R(x,y) =R^x(x,y) &= 
-\sum\limits_{i=1}^{2}\dfrac{\partial}{\partial x_{i}}
\left(\frac{1}{a(x)}\dfrac{\partial a(x)}{\partial x_{i}}P_{\Delta}(x,y)\right)\\
& =
-\sum\limits_{i=1}^{2}\dfrac{\partial}{\partial x_{i}}
\left(\dfrac{\partial \ln a(x)}{\partial x_{i}}P_{\Delta}(x,y)\right),\hspace{0.5em}x,y \in \mathbb{R}^{2}.
\end{align*}

Note that the both remainders $R_x$ and $R_y$ are weakly singular, i.e., \[
R^x(x,y),\,R^y(x,y)\in \mathcal{O}(\vert x-y\vert^{-2}).\]
 This is due to the smoothness of the variable coefficient $a$.

\section{Volume and surface potentials}

The parametrix-based logarithmic and remainder potential operators are respectively defined, similar to \cite{{mikhailov1},{carlos2}} in the 3D case for $y\in\mathbb R^2$, as
\begin{align*}
\mathcal{P}\rho(y)&:=\displaystyle\int_{\Omega} P(x,y)\rho(x)\hspace{0.25em}dx\\
\mathcal{R}\rho(y)&:=\displaystyle\int_{\Omega} R(x,y)\rho(x)\hspace{0.25em}dx.
\end{align*}

 The parametrix-based single layer and double layer  surface potentials are defined for $y\in\mathbb R^2:y\notin S $, as 
\begin{equation*}\label{ch4SL}
V\rho(y):=-\int_{S} P(x,y)\rho(x)\hspace{0.25em}dS(x),
\end{equation*}
\begin{equation*}\label{ch4DL}
W\rho(y):=-\int_{S} T_{x}^{+}P(x,y)\rho(x)\hspace{0.25em}dS(x).
\end{equation*}

We also define the following pseudo-differential operators associated with direct values of the single and  double layer potentials and with their conormal derivatives, for $y\in S$,
\begin{align}
\mathcal{V}\rho(y)&:=-\int_{S} P(x,y)\rho(x)\hspace{0.25em}dS(x),\nonumber \\
\mathcal{W}\rho(y)&:=-\int_{S} T_{x}P(x,y)\rho(x)\hspace{0.25em}dS(x),\nonumber\\
\mathcal{W'}\rho(y)&:=-\int_{S} T_{y}P(x,y)\rho(x)\hspace{0.25em}dS(x),\nonumber\\
\mathcal{L}^{\pm}\rho(y)&:=T_{y}^{\pm}{W}\rho(y)\nonumber.
\end{align}

The operators $\mathcal P, \mathcal R, V, W, \mathcal{V}, \mathcal{W}, \mathcal{W'}$ and $\mathcal{L}$ can be expressed in terms the volume and surface potentials and operators associated with the Laplace operator, as follows
\begin{align}
\mathcal{P}\rho&=\mathcal{P}_{\Delta}\left(\dfrac{\rho}{a}\right),\label{ch4relP}\\
\mathcal{R}\rho&=\nabla\cdot\left[\mathcal{P}_{\Delta}(\rho\,\nabla \ln a)\right]-\mathcal{P}_{\Delta}(\rho\,\Delta \ln a),\label{ch4relR}\\
V\rho &= V_{\Delta}\left(\dfrac{\rho}{a}\right),\label{ch4relSL}\\
\mathcal{V}\rho &= \mathcal{V}_{\Delta} \left( \dfrac{\rho}{a}\right),\label{ch4relDVSL}\\
W\rho &= W_{\Delta}\rho -V_{\Delta}\left(\rho\frac{\partial \ln a}{\partial n}\right),\label{ch4relDL}\\
\mathcal{W}\rho &= \mathcal{W}_{\Delta}\rho -\mathcal{V}_{\Delta}\left(\rho\frac{\partial \ln a}{\partial n}\right),\label{ch4relDVDL}\\
\mathcal{W}'\rho &= a \mathcal{W'}_{\Delta}\left(\dfrac{\rho}{a}\right),\label{ch4relTSL} \\
\mathcal{L}^{\pm}\rho &= \widehat{\mathcal{L}}\rho - aT^{\pm}_\Delta V_{\Delta}\left(\rho\frac{\partial \ln a}{\partial n}\right),
\label{ch4relTDL}\\
\widehat{\mathcal{L}}\rho &:= a\mathcal{L}_{\Delta}\rho.\label{ch4hatL}
\end{align}

The symbols with the subscript $\Delta$ denote the analogous operator for the constant coefficient case, $a\equiv 1$. Furthermore, by the Liapunov-Tauber theorem, $\mathcal{L}_{\Delta}^{+}\rho = \mathcal{L}_{\Delta}^{-}\rho = \mathcal{L}_{\Delta}\rho$.

Using relations \eqref{ch4relP}-\eqref{ch4hatL} it is now rather simple to obtain, similar to \cite{mikhailov1}, the mapping properties, jump relations and invertibility results for the parametrix-based  surface and volume potentials, provided in theorems/corollary \ref{ch4thmUR}-\ref{ch4thcompVW}, from the well-known properties of their constant-coefficient counterparts (associated with the Laplace equation).
 
 \begin{theorem}\label{ch4thmUR} Let $s\in \mathbb{R}$. Then, the following operators are continuous,
 \begin{equation*}
 \mathcal{P}:\widetilde{H}^{s}(\Omega) \longrightarrow H^{s+2}(\Omega),\hspace{0.5em} s\in \mathbb{R},\label{ch4mpvp1}\\
 \end{equation*}
 \begin{equation*}
 \mathcal{P}: H^{s}(\Omega) \longrightarrow H^{s+2}(\Omega),\hspace{0.5em} s>-\dfrac{1}{2},\label{ch4mpvp2}\\
 \end{equation*}
 \begin{equation*}
 \mathcal{R}:\widetilde{H}^{s}(\Omega) \longrightarrow H^{s+1}(\Omega),\hspace{0.5em} s\in \mathbb{R},\label{ch4mpvp3}\\
 \end{equation*}
 \begin{equation*}
 \mathcal{R}: H^{s}(\Omega) \longrightarrow H^{s+1}(\Omega),\hspace{0.5em} s>-\dfrac{1}{2}\,.\label{ch4mpvp4}
 \end{equation*}
 \end{theorem}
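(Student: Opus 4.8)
The plan is to deduce all four continuity statements from the corresponding well-known mapping properties of the Laplace logarithmic volume potential $\mathcal{P}_\Delta$, combined with the representation formulae \eqref{ch4relP} and \eqref{ch4relR} and the fact that multiplication by a function in $\mathcal{C}^\infty(\overline{\Omega})$ is a bounded operator on the relevant Sobolev spaces. Recall that in two dimensions the Newtonian (logarithmic) potential smooths by two orders, so that I would take as known the continuity of
\[
\mathcal{P}_\Delta:\widetilde{H}^{s}(\Omega)\longrightarrow H^{s+2}(\Omega),\;\; s\in\mathbb{R},
\qquad
\mathcal{P}_\Delta: H^{s}(\Omega)\longrightarrow H^{s+2}(\Omega),\;\; s>-\tfrac12 .
\]
Since $a\in\mathcal{C}^\infty(\overline{\Omega})$ is positive, hence bounded away from zero on $\overline{\Omega}$, after fixing a smooth positive extension of $a$ to $\mathbb{R}^2$ the functions $1/a$, $\partial(\ln a)/\partial x_i$ and $\Delta\ln a$ all belong to $\mathcal{C}^\infty(\overline{\Omega})$ and therefore act as bounded multipliers preserving the order of each space $\widetilde{H}^{s}(\Omega)$ and $H^{s}(\Omega)$.

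For $\mathcal{P}$ the argument is immediate. By \eqref{ch4relP} we have $\mathcal{P}\rho=\mathcal{P}_\Delta(\rho/a)$, so $\mathcal{P}$ is the composition of the bounded multiplier $\rho\mapsto\rho/a$ (mapping $\widetilde{H}^{s}(\Omega)\to\widetilde{H}^{s}(\Omega)$, respectively $H^{s}(\Omega)\to H^{s}(\Omega)$) with $\mathcal{P}_\Delta$. This yields the continuity of $\mathcal{P}:\widetilde{H}^{s}(\Omega)\to H^{s+2}(\Omega)$ for all $s\in\mathbb{R}$ and of $\mathcal{P}:H^{s}(\Omega)\to H^{s+2}(\Omega)$ for $s>-1/2$.

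For $\mathcal{R}$ I would use the splitting \eqref{ch4relR}, namely $\mathcal{R}\rho=\nabla\cdot[\mathcal{P}_\Delta(\rho\,\nabla\ln a)]-\mathcal{P}_\Delta(\rho\,\Delta\ln a)$, and treat the two summands separately. In the second term the map $\rho\mapsto\rho\,\Delta\ln a$ preserves the order and $\mathcal{P}_\Delta$ gains two orders, so this term already lies in $H^{s+2}(\Omega)\subset H^{s+1}(\Omega)$. In the first term each component of the vector field $\rho\,\nabla\ln a$ carries the same Sobolev order as $\rho$; applying $\mathcal{P}_\Delta$ componentwise gains two orders, and the first-order operator $\nabla\cdot$ then loses exactly one, for a net gain of one order and an image in $H^{s+1}(\Omega)$. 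Adding the two contributions gives the continuity of $\mathcal{R}:\widetilde{H}^{s}(\Omega)\to H^{s+1}(\Omega)$ for all $s$ and of $\mathcal{R}:H^{s}(\Omega)\to H^{s+1}(\Omega)$ for $s>-1/2$.

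The only delicate point, and the place where any genuine work sits, is the threshold $s>-1/2$ in the $H^{s}(\Omega)$ statements. This restriction is inherited directly from the constant-coefficient operator $\mathcal{P}_\Delta$: for a density merely prescribed on $\Omega$, rather than supported in $\overline{\Omega}$, the convolution-type operator must be understood through a canonical identification of $H^{s}(\Omega)$ with the relevant subspace, and continuity of this identification together with the potential holds precisely in the range $s>-1/2$, below which boundary (trace) effects at $S$ spoil the estimate. I would therefore be careful to state the mapping properties of $\mathcal{P}_\Delta$ on $H^{s}(\Omega)$ with this identification in place, so that the multiplier compositions above are legitimate. Granting the Laplace-potential properties and the smoothness and positivity of $a$, the remainder of the proof is the routine bookkeeping of orders carried out above.
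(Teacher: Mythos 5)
Your proposal is correct and follows essentially the same route as the paper, which derives Theorem \ref{ch4thmUR} by combining the representation formulae \eqref{ch4relP} and \eqref{ch4relR} with the known mapping properties of the constant-coefficient (Laplace) volume potential and the fact that $1/a$, $\nabla\ln a$ and $\Delta\ln a$ are smooth multipliers; the paper leaves these details implicit, and your order-counting for the two terms of $\mathcal{R}$ and your remark on the $s>-1/2$ threshold are accurate fillings-in of that sketch.
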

 \begin{corollary}\label{ch4corcompact} The following operators are compact for any $s>\frac{1}{2}$,
 \begin{align*}
 \mathcal{R}&: H^{s}(\Omega) \longrightarrow H^{s}(\Omega),\\
 \gamma^{+}\mathcal{R}&: H^{s}(\Omega) \longrightarrow H^{s-\frac{1}{2}}(S),\\
 T^{+}\mathcal{R}&: H^{s}(\Omega) \longrightarrow H^{s-\frac{3}{2}}(S).
 \end{align*}
 \end{corollary}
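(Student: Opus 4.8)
The plan is to prove all three compactness statements by a single template: combine the one-order smoothing property of $\mathcal{R}$ established in Theorem \ref{ch4thmUR} with the appropriate continuous trace or conormal-derivative operator, and then exploit a compact Sobolev embedding. Throughout I would use that on the bounded smooth domain $\Omega$ and on the compact manifold $S$ the embedding $H^{t}\hookrightarrow H^{r}$ is compact whenever $t>r$ (Rellich--Kondrachov), together with the elementary fact that the composition of a continuous operator with a compact one is compact.

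For the first operator, Theorem \ref{ch4thmUR} gives that $\mathcal{R}\colon H^{s}(\Omega)\longrightarrow H^{s+1}(\Omega)$ is continuous for $s>-\frac{1}{2}$, in particular for $s>\frac{1}{2}$. Since $s+1>s$, the embedding $H^{s+1}(\Omega)\hookrightarrow H^{s}(\Omega)$ is compact. Writing $\mathcal{R}$ as the continuous map into $H^{s+1}(\Omega)$ followed by this compact embedding yields the compactness of $\mathcal{R}\colon H^{s}(\Omega)\longrightarrow H^{s}(\Omega)$.

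For the remaining two operators I would factor through the same smoothing step. For $\gamma^{+}\mathcal{R}$, I first send $H^{s}(\Omega)$ continuously into $H^{s+1}(\Omega)$ by $\mathcal{R}$, then apply the trace operator $\gamma^{+}\colon H^{s+1}(\Omega)\longrightarrow H^{s+\frac{1}{2}}(S)$, which is continuous since $s+1>\frac{1}{2}$; composing with the compact embedding $H^{s+\frac{1}{2}}(S)\hookrightarrow H^{s-\frac{1}{2}}(S)$ gives the claim. For $T^{+}\mathcal{R}$, the same first step places $\mathcal{R}\rho$ in $H^{s+1}(\Omega)$, and because $s>\frac{1}{2}$ we have $s+1>\frac{3}{2}$, so the classical conormal derivative is well defined and $T^{+}\colon H^{s+1}(\Omega)\longrightarrow H^{s-\frac{1}{2}}(S)$ is continuous. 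Composing with the compact embedding $H^{s-\frac{1}{2}}(S)\hookrightarrow H^{s-\frac{3}{2}}(S)$ then yields the compactness of $T^{+}\mathcal{R}\colon H^{s}(\Omega)\longrightarrow H^{s-\frac{3}{2}}(S)$.

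The argument is essentially routine once Theorem \ref{ch4thmUR} is available; the only point requiring care is the role of the hypothesis $s>\frac{1}{2}$. It is needed precisely for the third operator: the classical (trace-sense) conormal derivative $T^{+}$ requires its argument to lie in $H^{t}(\Omega)$ with $t>\frac{3}{2}$, and after the smoothing step the argument lies in $H^{s+1}(\Omega)$, so one needs $s+1>\frac{3}{2}$. Thus $s>\frac{1}{2}$ is exactly the threshold that makes $T^{+}\mathcal{R}$ meaningful, while the same bound comfortably validates the trace in the second statement. Beyond identifying the correct intermediate spaces and invoking the standard compact embeddings, I anticipate no further difficulty.
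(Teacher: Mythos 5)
Your argument is correct and is precisely the derivation the paper intends: the corollary is presented as an immediate consequence of the smoothing property $\mathcal{R}\colon H^{s}(\Omega)\to H^{s+1}(\Omega)$ from Theorem \ref{ch4thmUR}, composed with the continuous trace/conormal-derivative maps and the compact Rellich embeddings on $\Omega$ and $S$. Your observation that $s>\frac{1}{2}$ is exactly the threshold needed for the classical conormal derivative $T^{+}$ on $H^{s+1}(\Omega)$ is also the right explanation of the hypothesis.
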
   

\begin{theorem}\label{ch4thmappingVW} Let $s\in \mathbb{R}$. Then, the following operators are continuous:
\begin{align*}
V: H^{s}(S) \longrightarrow H^{s+\frac{3}{2}}(\Omega),\\
W: H^{s}(S) \longrightarrow H^{s+\frac{1}{2}}(\Omega).
\end{align*}
\end{theorem}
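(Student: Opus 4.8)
The plan is to reduce both mapping properties to the corresponding well-known results for the constant-coefficient (Laplace) potentials $V_\Delta$ and $W_\Delta$ by exploiting the representation formulae \eqref{ch4relSL} and \eqref{ch4relDL}. The essential observation is that, since $a\in\mathcal{C}^{\infty}(\overline{\Omega})$ is strictly positive, both $1/a$ and the function $\partial\ln a/\partial n=\frac{1}{a}\frac{\partial a}{\partial n}$ are restrictions to $S$ of $\mathcal{C}^{\infty}$ functions; because $S\in\mathcal{C}^{\infty}$, multiplication by such a smooth function is a continuous operator $H^{s}(S)\longrightarrow H^{s}(S)$ for every $s\in\mathbb{R}$.

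First I would treat the single layer potential. By \eqref{ch4relSL}, $V\rho=V_\Delta(\rho/a)$. The map $\rho\mapsto\rho/a$ is continuous on $H^{s}(S)$ by the observation above, and the classical single layer potential for the Laplacian satisfies $V_\Delta:H^{s}(S)\longrightarrow H^{s+\frac{3}{2}}(\Omega)$ continuously (see, e.g., \cite{mclean, hsiao}). Composing these two continuous maps gives $V:H^{s}(S)\longrightarrow H^{s+\frac{3}{2}}(\Omega)$, as claimed.

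For the double layer potential, I would use \eqref{ch4relDL}, namely $W\rho=W_\Delta\rho-V_\Delta\!\left(\rho\,\partial\ln a/\partial n\right)$, and bound the two terms separately. The first term is handled directly by the classical mapping property $W_\Delta:H^{s}(S)\longrightarrow H^{s+\frac{1}{2}}(\Omega)$. For the second term, multiplication by $\partial\ln a/\partial n$ is continuous on $H^{s}(S)$, and $V_\Delta$ then maps into $H^{s+\frac{3}{2}}(\Omega)$; since $\Omega$ is bounded we have the continuous embedding $H^{s+\frac{3}{2}}(\Omega)\hookrightarrow H^{s+\frac{1}{2}}(\Omega)$, so this contribution also lands continuously in $H^{s+\frac{1}{2}}(\Omega)$. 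Adding the two contributions yields $W:H^{s}(S)\longrightarrow H^{s+\frac{1}{2}}(\Omega)$.

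The argument is essentially bookkeeping once the Laplace-case results are in hand; the only point requiring care is the continuity of multiplication by the smooth coefficients on $H^{s}(S)$ for all real $s$, in particular for negative $s$, where it is justified by duality from the case of positive order. This is where I would be most careful, although it is entirely standard for multiplication by $\mathcal{C}^{\infty}$ functions on a smooth closed manifold.
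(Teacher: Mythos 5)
Your proof is correct and follows essentially the same route as the paper, which derives these mapping properties from the relations \eqref{ch4relSL} and \eqref{ch4relDL} together with the known continuity of the constant-coefficient potentials $V_\Delta$ and $W_\Delta$ and the smoothness of the coefficient $a$. The paper only sketches this reduction (referring to \cite{mikhailov1}), and your write-up supplies the same bookkeeping, including the correct handling of the lower-order term via the embedding $H^{s+\frac{3}{2}}(\Omega)\hookrightarrow H^{s+\frac{1}{2}}(\Omega)$.
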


\begin{theorem}\label{ch4thmappingDVVW} Let $s\in \mathbb{R}$. Then, the following operators are continuous:
\begin{align*}
\mathcal{V}&: H^{s}(S) \longrightarrow H^{s+1}(S),\\
\mathcal{W}&: H^{s}(S) \longrightarrow H^{s+1}(S),\\
\mathcal{W'}&: H^{s}(S) \longrightarrow H^{s+1}(S),\\
\mathcal{L}^\pm&: H^{s}(S) \longrightarrow H^{s-1}(S).
\end{align*}
\end{theorem}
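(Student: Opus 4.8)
The plan is to reduce each of the four operators to its Laplace (constant-coefficient) counterpart via the representation formulae \eqref{ch4relDVSL}, \eqref{ch4relDVDL}, \eqref{ch4relTSL}, \eqref{ch4relTDL} and \eqref{ch4hatL}, and then to invoke the classical mapping properties of the surface operators $\mathcal{V}_\Delta$, $\mathcal{W}_\Delta$, $\mathcal{W'}_\Delta$, $\mathcal{L}_\Delta$ and $T^\pm_\Delta V_\Delta$ on the smooth closed curve $S$, together with the following elementary observation: since $a\in\mathcal{C}^\infty(\overline{\Omega})$ is positive and $S\in\mathcal{C}^\infty$, the restrictions $a|_S$, $(1/a)|_S$ and $(\partial_n\ln a)|_S$ all belong to $\mathcal{C}^\infty(S)$, so multiplication by any of them is a pseudodifferential operator of order zero on $S$, hence continuous in $H^s(S)$ for every $s\in\mathbb{R}$. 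The continuity of each variable-coefficient operator then follows by composing continuous maps of known order.

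First I would dispose of $\mathcal{V}$, $\mathcal{W}$ and $\mathcal{W'}$. For $\mathcal{V}$, formula \eqref{ch4relDVSL} reads $\mathcal{V}\rho=\mathcal{V}_\Delta(\rho/a)$; since $\rho\mapsto\rho/a$ is continuous $H^s(S)\to H^s(S)$ and $\mathcal{V}_\Delta:H^s(S)\to H^{s+1}(S)$ is the classical weakly singular operator, the composition is continuous $H^s(S)\to H^{s+1}(S)$. For $\mathcal{W}$, formula \eqref{ch4relDVDL} presents it as $\mathcal{W}_\Delta\rho$ minus $\mathcal{V}_\Delta(\rho\,\partial_n\ln a)$; both summands map $H^s(S)$ into $H^{s+1}(S)$ by the same reasoning, the second one after first multiplying by the smooth factor $\partial_n\ln a$ and then applying $\mathcal{V}_\Delta$. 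The operator $\mathcal{W'}$ is handled identically using $\mathcal{W'}\rho=a\,\mathcal{W'}_\Delta(\rho/a)$ from \eqref{ch4relTSL}: multiply by $1/a$, apply $\mathcal{W'}_\Delta:H^s(S)\to H^{s+1}(S)$, then multiply by $a$, all of which preserve the required regularity.

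The operator $\mathcal{L}^\pm$ is the \emph{delicate} case, and I expect it to be the main obstacle. Using \eqref{ch4relTDL}–\eqref{ch4hatL}, $\mathcal{L}^\pm\rho=a\,\mathcal{L}_\Delta\rho - a\,T^\pm_\Delta V_\Delta(\rho\,\partial_n\ln a)$. The first term is unproblematic: $\mathcal{L}_\Delta:H^s(S)\to H^{s-1}(S)$ is the classical order-one hypersingular operator, and multiplication by $a$ leaves $H^{s-1}(S)$ invariant. The care is needed for the composite $T^\pm_\Delta V_\Delta$, for which I would first establish continuity $H^s(S)\to H^s(S)$. This can be read off either from the jump relation $T^\pm_\Delta V_\Delta=\mp\tfrac{1}{2}I+\mathcal{W'}_\Delta$, which exhibits it as the sum of the identity (order zero) and $\mathcal{W'}_\Delta$ (order $-1$), or directly from the chain $V_\Delta:H^s(S)\to H^{s+3/2}(\Omega)$ followed by the conormal derivative, which costs $3/2$ derivatives and returns to $H^s(S)$. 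Granting this, the second term of $\mathcal{L}^\pm$ maps $H^s(S)$ continuously into $H^s(S)\hookrightarrow H^{s-1}(S)$, so it is in fact smoother than required and the target space is governed by the first term, yielding $\mathcal{L}^\pm:H^s(S)\to H^{s-1}(S)$ for all $s\in\mathbb{R}$.

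Throughout, the only substantive inputs beyond the representation formulae are the classical orders of the four Laplace surface operators and the boundedness of the smooth-coefficient multiplications on every $H^s(S)$; the latter is the point to be mindful of for negative $s$, but it is covered by the order-zero pseudodifferential character of multiplication by a $\mathcal{C}^\infty$ function on the compact smooth manifold $S$. No compactness or Fredholm machinery is needed here, as the statement is purely a matter of composing operators of known mapping order.
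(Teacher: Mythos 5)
Your proposal is correct and follows essentially the same route as the paper, which proves Theorem \ref{ch4thmappingDVVW} precisely by invoking the relations \eqref{ch4relDVSL}--\eqref{ch4hatL} to reduce each operator to its Laplace counterpart (whose mapping orders on the smooth curve $S$ are classical) combined with multiplication by smooth functions of $a$ on $S$. The only cosmetic slip is the sign in the jump relation for $T^{\pm}_{\Delta}V_{\Delta}$ (the paper has $\pm\tfrac{1}{2}I+\mathcal{W}'_{\Delta}$), which does not affect the mapping-order argument.
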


\begin{theorem}\label{ch4thjumps} Let  $\rho\in H^{-\frac{1}{2}}(S)$, $\tau\in H^{\frac{1}{2}}(S)$. Then the following operators jump relations hold:
\begin{align*}
\gamma^{\pm}V\rho&=\mathcal{V}\rho,\\
\gamma^{\pm}W\tau&=\mp\dfrac{1}{2}\tau+\mathcal{W}\tau,\\
T^{\pm}V\rho&=\pm\dfrac{1}{2}\rho+\mathcal{W'}\rho.
\end{align*}
\end{theorem}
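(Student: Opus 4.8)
The plan is to derive each of the three jump relations from the corresponding classical jump relations for the Laplace potentials $V_\Delta$, $W_\Delta$, $\mathcal{V}_\Delta$, $\mathcal{W}_\Delta$, $\mathcal{W'}_\Delta$, combined with the representation formulae \eqref{ch4relSL}--\eqref{ch4relTSL} that express the variable-coefficient potentials through their constant-coefficient counterparts. Throughout I would use that, since $a\in\mathcal{C}^{\infty}(\overline\Omega)$ is positive, multiplication by $a$, by $1/a$, and by $\frac{\partial \ln a}{\partial n}$ are isomorphisms on the relevant trace spaces $H^{\pm 1/2}(S)$; this guarantees that all the densities appearing below (such as $\rho/a$ and $\tau\,\frac{\partial \ln a}{\partial n}$) remain in the Sobolev classes for which the classical relations apply.

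First, for $\gamma^\pm V\rho$, I would apply the trace operator to \eqref{ch4relSL}. Because $V_\Delta$ has a trace continuous across $S$ with $\gamma^\pm V_\Delta\sigma=\mathcal{V}_\Delta\sigma$, the choice $\sigma=\rho/a$ gives $\gamma^\pm V\rho=\mathcal{V}_\Delta(\rho/a)$, which is $\mathcal{V}\rho$ by \eqref{ch4relDVSL}. The two one-sided traces coincide precisely because the factor $1/a$ carries no jump across $S$. For $\gamma^\pm W\tau$, I would apply the trace to \eqref{ch4relDL} and treat the two summands separately: the classical relation $\gamma^\pm W_\Delta\tau=\mp\frac12\tau+\mathcal{W}_\Delta\tau$ supplies the jump term, while $\gamma^\pm V_\Delta\big(\tau\,\frac{\partial \ln a}{\partial n}\big)=\mathcal{V}_\Delta\big(\tau\,\frac{\partial \ln a}{\partial n}\big)$ is continuous across $S$; collecting the direct-value terms and invoking \eqref{ch4relDVDL} yields $\gamma^\pm W\tau=\mp\frac12\tau+\mathcal{W}\tau$.

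The third relation, $T^\pm V\rho$, requires the most care, and is where I expect the main obstacle. The key observation is that on $S$ the variable-coefficient conormal derivative factorises as $T^\pm u = a\,T^\pm_\Delta u$, which follows directly from the definition of $T^\pm$ together with the fact that $T^\pm_\Delta$ is the Laplace conormal (i.e.\ normal) derivative, since the scalar $a(x)$ evaluated on $S$ pulls out of the sum over $i$. Applying this to $u=V\rho=V_\Delta(\rho/a)$ and using the classical relation $T^\pm_\Delta V_\Delta\sigma=\pm\frac12\sigma+\mathcal{W'}_\Delta\sigma$ with $\sigma=\rho/a$ gives $T^\pm V\rho=a\big(\pm\frac12\,\tfrac{\rho}{a}+\mathcal{W'}_\Delta(\rho/a)\big)=\pm\frac12\rho+a\,\mathcal{W'}_\Delta(\rho/a)$, and the last term is exactly $\mathcal{W'}\rho$ by \eqref{ch4relTSL}.

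The delicate points in this last step are twofold. First, I must verify that $V\rho\in H^{1,0}(\Omega;\mathcal A)$ so that the canonical conormal derivative of \eqref{ch4green1} is well defined; this follows from the mapping property $V:H^{-1/2}(S)\to H^{1}(\Omega)$ in Theorem \ref{ch4thmappingVW} together with the parametrix identity \eqref{parametrixdef}, which ensures $\mathcal{A}V\rho$ is square-integrable away from $S$, and one then checks that the classical-trace computation above agrees with the canonical conormal derivative (by density of smooth densities and continuity of both operators). Second, the bookkeeping must be handled carefully: the multiplicative factor $a$ cancels against the $1/a$ in the density so that the jump coefficient stays $\pm\frac12$ rather than becoming $\pm\frac{a}{2}$, which is the point at which the variable coefficient could otherwise corrupt the expected jump.
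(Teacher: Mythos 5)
Your proposal is correct and follows exactly the route the paper indicates: it derives the jump relations from the classical Laplace-potential jump relations via the representation formulae \eqref{ch4relSL}, \eqref{ch4relDVSL}, \eqref{ch4relDL}, \eqref{ch4relDVDL} and \eqref{ch4relTSL}, using that $T^{\pm}=a\,T^{\pm}_{\Delta}$ and that multiplication by the smooth positive factors $a$, $1/a$ and $\partial\ln a/\partial n$ preserves the trace spaces. The cancellation of $a$ against $\rho/a$ in the third relation, which you flag as the delicate point, is precisely what keeps the jump coefficient equal to $\pm\tfrac{1}{2}$, in agreement with the stated theorem.
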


\begin{theorem}\label{ch4thcompVW} Let $s\in \mathbb{R}.$ The following operators 
\begin{align*}
\mathcal{V}: H^{s}(S) \longrightarrow H^{s}(S),\\
\mathcal{W}: H^{s}(S) \longrightarrow H^{s}(S),\\
\mathcal{W}':H^{s}(S) \longrightarrow H^{s}(S).
\end{align*}
are compact.
\end{theorem}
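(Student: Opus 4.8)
The plan is to read off the result from a single structural observation: on the compact closed curve $S$ each of the three operators \emph{gains one order of Sobolev regularity}, and a continuous map that raises smoothness by a positive amount becomes compact once its target is re-embedded into the original space. Concretely, I would factor each operator as the composition of its smoothing action $H^{s}(S)\to H^{s+1}(S)$ with the embedding $H^{s+1}(S)\hookrightarrow H^{s}(S)$, and then appeal to the compactness of the latter.

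First I would record the smoothing step. Theorem \ref{ch4thmappingDVVW} already states that $\mathcal{V}$, $\mathcal{W}$ and $\mathcal{W}'$ map $H^{s}(S)$ continuously into $H^{s+1}(S)$ for every $s\in\mathbb{R}$, so this is available to be quoted directly. For completeness I would indicate where that gain of one order originates, namely the representation formulae \eqref{ch4relDVSL}, \eqref{ch4relDVDL} and \eqref{ch4relTSL}, which express the variable-coefficient operators through their Laplace counterparts,
\begin{align*}
\mathcal{V}\rho &= \mathcal{V}_{\Delta}\!\left(\dfrac{\rho}{a}\right), &
\mathcal{W}\rho &= \mathcal{W}_{\Delta}\rho-\mathcal{V}_{\Delta}\!\left(\rho\,\dfrac{\partial\ln a}{\partial n}\right), &
\mathcal{W}'\rho &= a\,\mathcal{W}'_{\Delta}\!\left(\dfrac{\rho}{a}\right).
\end{align*}
Since $a\in\mathcal{C}^{\infty}(\overline{\Omega})$ is positive, the factors $1/a$, $a$ and $\partial_n\ln a$ restrict to smooth functions on $S$, and multiplication by a smooth function is bounded on $H^{s}(S)$ for every $s$; thus the order-$(-1)$ smoothing of $\mathcal{V}_{\Delta}$, $\mathcal{W}_{\Delta}$, $\mathcal{W}'_{\Delta}$ (these are pseudodifferential operators of order $-1$ on the smooth curve $S$) is inherited unchanged by $\mathcal{V}$, $\mathcal{W}$, $\mathcal{W}'$.

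Finally I would close the argument with the Rellich–Kondrachov theorem: because $S$ is a closed, infinitely smooth one-dimensional manifold, the embedding $H^{s+1}(S)\hookrightarrow H^{s}(S)$ is compact for every $s\in\mathbb{R}$. Viewing each of $\mathcal{V}$, $\mathcal{W}$, $\mathcal{W}'$ as a map $H^{s}(S)\to H^{s}(S)$, it therefore factors as a bounded operator into $H^{s+1}(S)$ followed by a compact inclusion, and the composition of a bounded operator with a compact one is compact, which gives the claim for all three operators simultaneously. I do not anticipate a genuine obstacle here, since the heavy lifting has already been done in Theorem \ref{ch4thmappingDVVW}; the only points requiring care are the standard justification that multiplication by a smooth coefficient preserves $H^{s}(S)$ and the invocation of the compact embedding on a manifold rather than on a domain. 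It is worth noting that this mechanism does not extend to $\mathcal{L}^{\pm}$, which by Theorem \ref{ch4thmappingDVVW} \emph{loses} one order of smoothness, consistent with its omission from the present statement.
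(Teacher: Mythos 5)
Your argument is correct and is exactly the route the paper intends: it leaves Theorem \ref{ch4thcompVW} unproved beyond the remark that it follows from the relations \eqref{ch4relDVSL}--\eqref{ch4relTSL} and the constant-coefficient theory, and the standard mechanism behind that remark is precisely your factorisation through the order-one smoothing of Theorem \ref{ch4thmappingDVVW} followed by the compact embedding $H^{s+1}(S)\hookrightarrow H^{s}(S)$ on the compact smooth curve $S$. Your closing observation about $\mathcal{L}^{\pm}$ is also consistent with the paper.
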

\section{Invertibility of the single layer potential operator}
It is well-known that for some 2D domains the kernel of the operator $\mathcal{V}_{\Delta}$ is non-zero, which by relation \eqref{ch4relSL} also implies that the kernel of the operator $\mathcal{V}$ is also non-zero for the same domain (see e.g. \cite[Remark 1.42(ii)]{costanda}, \cite[proof of Theorem 6.22]{steinbach}, \cite{dufera}).

Since the boundary integral operator $\mathcal{V}$ has the non-trivial kernel on some two dimensional domains, we have to consider the boundary integral operator in suitable spaces. Thus in order to have invertibility for the single layer potential operator in two dimension, we define the following subspace of the space $H^{-\frac{1}{2}}(S)$ (see, e.g.,\cite[Eq. (6.30)]{steinbach},
\begin{equation*}
H_{*}^{-\frac{1}{2}}(S):=\left\lbrace \phi \in H^{-\frac{1}{2}}(S):\langle \phi,1\rangle_{S}=0\right\rbrace ,
\end{equation*}
where the norm in $H_{*}^{-\frac{1}{2}}(S)$ is the induced by the norm in $H^{-\frac{1}{2}}(S)$.
\begin{theorem}\label{ch4thinv2DV1}
Let $\psi \in H_{*}^{-\frac{1}{2}}(\partial\Omega)$ satisfies $\mathcal{V}\psi =0$ on $\partial\Omega$, then $\psi =0$.
\end{theorem}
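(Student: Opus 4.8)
The plan is to reduce the statement to the constant-coefficient (Laplace) single layer operator and then exploit sign properties of harmonic functions together with the strict positivity of $a$ and the zero-mean constraint defining $H_{*}^{-\frac{1}{2}}(S)$. By the relation \eqref{ch4relDVSL}, the hypothesis $\mathcal{V}\psi=0$ is equivalent to $\mathcal{V}_{\Delta}(\psi/a)=0$. Setting $\phi:=\psi/a\in H^{-\frac{1}{2}}(S)$ and $u:=V\psi=V_{\Delta}(\psi/a)$ (using \eqref{ch4relSL}), the function $u$ is harmonic in both $\Omega^{+}$ and $\Omega^{-}$, and by the jump relation $\gamma^{\pm}V\psi=\mathcal{V}\psi$ of Theorem \ref{ch4thjumps} we have $\gamma^{+}u=\gamma^{-}u=0$ on $S$.

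First I would treat the interior. Since $u$ is harmonic in $\Omega^{+}$ with vanishing Dirichlet trace, the uniqueness of the interior Dirichlet problem (Theorem \ref{ch4Thomsol}, equivalently the first Green identity \eqref{ch4green1.1}, which yields $\int_{\Omega^{+}}|\nabla u|^{2}\,dx=\langle T^{+}u,\gamma^{+}u\rangle_{S}=0$) forces $u\equiv 0$ in $\Omega^{+}$, and hence $T^{+}u=0$ on $S$. Combining this with the jump relation $T^{+}V\psi-T^{-}V\psi=\psi$ of Theorem \ref{ch4thjumps} gives $\psi=-T^{-}u$, so that $\langle\psi,1\rangle_{S}=-\langle T^{-}u,1\rangle_{S}$. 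It remains to show $T^{-}u=0$.

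The exterior analysis is the crux. In two dimensions $u(y)=-\frac{1}{2\pi}\langle\phi,1\rangle_{S}\log|y|+O(1)$ as $|y|\to\infty$, so the behaviour at infinity is governed by the total charge $\langle\phi,1\rangle_{S}$. If $\langle\phi,1\rangle_{S}=0$, then $u$ decays at infinity, and a harmonic function in $\Omega^{-}$ vanishing both on $S$ and at infinity must vanish identically by the exterior maximum principle; hence $T^{-}u=0$ and $\psi=0$. If instead $\langle\phi,1\rangle_{S}\neq 0$, say $\langle\phi,1\rangle_{S}>0$, then $u(y)\to-\infty$, so $u\le 0$ in $\Omega^{-}$ by the maximum principle on the exterior domain, and the strong maximum principle gives $u<0$ in $\Omega^{-}$. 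Since $u=0$ on $S$, Hopf's lemma then forces the one-sided normal derivative of $u$ to be sign-definite along all of $S$; because the conormal $T^{-}$ carries the strictly positive factor $a$, the density $\psi=-T^{-}u$ is of one (strict) sign on $S$. Consequently $\langle\psi,1\rangle_{S}=\int_{S}\psi\,dS\neq 0$, contradicting $\psi\in H_{*}^{-\frac{1}{2}}(S)$ (the case $\langle\phi,1\rangle_{S}<0$ being symmetric). Thus this case cannot occur and $\psi=0$.

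I expect the exterior step to be the main obstacle. Unlike the three-dimensional situation, the Laplace single layer operator genuinely possesses a nontrivial kernel on some planar curves (the logarithmic, or capacity, obstruction), so $u=V_{\Delta}\phi$ need not vanish in $\Omega^{-}$ and the argument cannot be closed by uniqueness alone. The essential ingredient is that any kernel density is sign-definite on $S$ --- which I would obtain from the maximum principle and Hopf's lemma as above, or alternatively by invoking the positivity of the equilibrium (natural) density as in \cite{steinbach} --- so that weighting by the strictly positive coefficient $a$ preserves the sign and the normalisation $\langle\psi,1\rangle_{S}=0$ can be satisfied only by the trivial density.
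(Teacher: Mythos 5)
Your proof is correct, but it follows a genuinely different and considerably more self-contained route than the paper's. The paper's proof is a two-line reduction: it invokes relation \eqref{ch4relDVSL} to write $\mathcal{V}\psi=\mathcal{V}_{\Delta}(\psi/a)$ and then asserts that injectivity ``follows from the invertibility of $\mathcal{V}_{\Delta}$,'' citing \cite{costste,miksolandreg,dufera}. As stated, that reduction has a gap that your argument explicitly fills: the known injectivity of $\mathcal{V}_{\Delta}$ in 2D holds on the zero-mean subspace $H_{*}^{-1/2}(S)$, but $\psi\in H_{*}^{-1/2}(S)$ does \emph{not} imply $\psi/a\in H_{*}^{-1/2}(S)$, so one cannot simply apply that injectivity to $\psi/a$. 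The correct completion is exactly the observation at the heart of your exterior analysis: any element of $\ker\mathcal{V}_{\Delta}$ is (a multiple of) the equilibrium density, which is sign-definite on $S$; hence $\psi=a\cdot(\psi/a)$ is sign-definite whenever $\psi/a\neq 0$, contradicting $\langle\psi,1\rangle_{S}=0$. You obtain this sign-definiteness from first principles (maximum principle at infinity plus Hopf's lemma, legitimate here since $\gamma^{-}u=0$ on the $C^{\infty}$ curve $S$ gives smoothness of $u$ up to the boundary), whereas the paper delegates it to the cited references. Two small blemishes, neither fatal: your appeal to Theorem \ref{ch4Thomsol} for the interior step is slightly off target, since $u=V_{\Delta}(\psi/a)$ is $\Delta$-harmonic but not $\mathcal{A}$-harmonic --- what you actually use, and correctly state in the parenthesis, is uniqueness for the interior Dirichlet problem for the \emph{Laplacian}; and the decay in the zero-total-charge case is $o(1)$, not merely $O(1)$, which is what lets the exterior maximum principle close that case. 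In exchange for its length, your argument buys a proof that does not depend on external invertibility theorems and that makes transparent why the weight $a$ is harmless, which is precisely the point the paper leaves implicit.
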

\begin{proof}
Relation \eqref{ch4relSL} gives $\mathcal{V}g = \mathcal{V}_{\Delta}g^{*}$, where $g = g^{*}/a$. The invertibility of $\mathcal{V}$ then follows from the invertibility of $\mathcal{V}_{\Delta}$, see references \cite[Theorem 2.4]{costste}, \cite[Theorem 3.5]{miksolandreg} and \cite[Theorem 4]{dufera}.
\end{proof}
\begin{theorem}\label{ch4thinv2DV2}
Let $\Omega \subset \mathbb{R}^{2}$ have the diameter $\mbox{diam}(\Omega) <1$. Then the single layer potential $\mathcal{V}:H^{-\frac{1}{2}}(\partial\Omega)\rightarrow H^{\frac{1}{2}}(\partial\Omega)$ is invertible.
\end{theorem}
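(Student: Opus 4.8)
The plan is to reduce the statement to the corresponding invertibility result for the Laplace single layer operator $\mathcal{V}_{\Delta}$ and then to establish the latter by a coercivity argument that is sensitive to the size of the domain. First I would use relation \eqref{ch4relDVSL}, which reads $\mathcal{V}\rho = \mathcal{V}_{\Delta}(\rho/a)$, to factor $\mathcal{V} = \mathcal{V}_{\Delta} \circ M_{1/a}$, where $M_{1/a}$ denotes multiplication by $1/a$. Since $a \in \mathcal{C}^{\infty}(\overline{\Omega})$ is strictly positive, its restriction to the smooth closed curve $S$ is a smooth nowhere-vanishing function, so $M_{1/a}$ is an isomorphism of $H^{-\frac{1}{2}}(S)$ onto itself (multiplication by such a function preserves each $H^{s}(S)$). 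Consequently $\mathcal{V}$ is invertible from $H^{-\frac{1}{2}}(S)$ onto $H^{\frac{1}{2}}(S)$ if and only if $\mathcal{V}_{\Delta}$ is, and the whole problem is transferred to the constant-coefficient operator.

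Next I would prove that $\mathcal{V}_{\Delta}: H^{-\frac{1}{2}}(S) \to H^{\frac{1}{2}}(S)$ is invertible under the hypothesis $\mathrm{diam}(\Omega) < 1$. Boundedness is already contained in Theorem \ref{ch4thmappingDVVW} (taken with $a \equiv 1$). The operator $\mathcal{V}_{\Delta}$ is symmetric and is generated by the bilinear form $\langle \mathcal{V}_{\Delta}\psi, \psi\rangle_{S} = -\frac{1}{2\pi}\int_{S}\int_{S}\log|x-y|\,\psi(x)\psi(y)\,dS_{x}\,dS_{y}$, so the natural route is to show it is $H^{-\frac{1}{2}}(S)$-elliptic, i.e.\ that there is a constant $c>0$ with $\langle \mathcal{V}_{\Delta}\psi,\psi\rangle_{S} \geq c\,\|\psi\|_{H^{-1/2}(S)}^{2}$ for all $\psi$. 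Granting this, the Lax--Milgram lemma immediately yields that $\mathcal{V}_{\Delta}$ is a bijection onto $H^{\frac{1}{2}}(S) = \big(H^{-\frac{1}{2}}(S)\big)'$, which finishes the proof.

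The main obstacle is precisely the ellipticity estimate, and it is here that the restriction $\mathrm{diam}(\Omega) < 1$ is indispensable: unlike the 3D case, the 2D logarithmic kernel makes $\mathcal{V}_{\Delta}$ only conditionally positive, the defect living in the constant mode and being governed by the logarithmic capacity of $\Omega$. Theorem \ref{ch4thinv2DV1} already removes the ambiguity on the subspace $H_{*}^{-\frac{1}{2}}(S)$ where $\langle\psi,1\rangle_{S}=0$; the new content is to handle the full space. I would exploit the scaling behaviour of the logarithm: writing $x = x_{0} + d\,\xi$ with $d = \mathrm{diam}(\Omega)$ rescales $S$ to a curve of unit diameter and shifts the kernel by the additive constant $-\frac{1}{2\pi}\log d$, which is strictly positive since $d<1$. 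Weighted against the constant mode $\langle\psi,1\rangle_{S}$, this positive shift is exactly what restores positive definiteness on all of $H^{-\frac{1}{2}}(S)$ and can be combined with the coercivity already available on $H_{*}^{-\frac{1}{2}}(S)$; equivalently, $d<1$ forces the transfinite diameter (logarithmic capacity) of $\Omega$ to be strictly less than $1$, the classical criterion for positivity of the logarithmic single layer operator. I expect the delicate point to be making this splitting quantitative --- controlling the cross terms between the mean-zero part and the constant mode so that a single constant $c>0$ emerges. This is the step encapsulated by the cited results \cite[Theorem 2.4]{costste} and \cite[Theorem 6.22]{steinbach}, which I would either invoke directly or reproduce through the scaling estimate above.
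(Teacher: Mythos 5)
Your proposal is correct and follows essentially the same route as the paper: reduce via the relation $\mathcal{V}\rho = \mathcal{V}_{\Delta}(\rho/a)$ (with $M_{1/a}$ an isomorphism of $H^{-1/2}(S)$ since $a$ is smooth and positive) to the invertibility of $\mathcal{V}_{\Delta}$ under $\mathrm{diam}(\Omega)<1$, which the paper simply cites from the literature. Your additional sketch of why $\mathcal{V}_{\Delta}$ is $H^{-1/2}(S)$-elliptic when the diameter (hence the logarithmic capacity) is below $1$ is sound and merely fills in the content of the references the paper invokes.
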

\begin{proof}
The proof is similar to the ones in \cite{dufera} but for the different parametrix \eqref{ch4Px} we have the relation \eqref{ch4relSL} and the invertibility of the operator $\mathcal{V}:H^{-\frac{1}{2}}(\partial\Omega) \rightarrow H^{\frac{1}{2}}(\partial\Omega)$ also follows.  
\end{proof}
\section{Third Green identities and integral relations}

In this section we provide the results similar to the ones in \cite{mikhailov1} but for our, different, parametrix \eqref{ch4Px}.  
  
Let  $u,v\in H^{1,0}(\Omega;\mathcal{A})$.   Subtracting from the first Green identity \eqref{ch4green1.1} its counterpart with the swapped $u$ and $v$, we arrive at the second Green identity, see e.g. \cite{mclean},
\begin{equation}\label{ch4green2}
\displaystyle\int_{\Omega}\left[u\,\mathcal{A}v - v\,\mathcal{A}u\right]dx= \int_{S}\left[u\, T^{+}v \,-\,v\, T^{+}u\,\right]dS(x). 
\end{equation}
Taking now $v(x):=P(x,y)$, we obtain from \eqref{ch4green2} by the standard limiting procedures (cf. \cite{miranda}) the third Green identity  for any function $u\in H^{1,0}(\Omega;\mathcal{A})$:
\begin{equation}\label{ch4green3}
u+\mathcal{R}u-VT^{+}u+W\gamma^{+}u=\mathcal{P}\mathcal{A}u,\hspace{1em}\text{in}\hspace{0.2em}\Omega.
\end{equation}

If $u\in H^{1,0}(\Omega; \mathcal{A})$ is a solution of the partial differential equation \eqref{ch4BVP1}, then, from \eqref{ch4green3} we obtain:

\begin{equation}\label{ch43GV}
u+\mathcal{R}u-VT^{+}u+W\gamma^{+}u=\mathcal{P}f,\hspace{0.5em}in\hspace{0.2em}\Omega;
\end{equation}
\begin{equation}\label{ch43GG}
\dfrac{1}{2}\gamma^{+}u+\gamma^{+}\mathcal{R}u-\mathcal{V}T^{+}u+\mathcal{W}\gamma^{+}u=\gamma^{+}\mathcal{P}f,\hspace{0.5em}on\hspace{0.2em}S.
\end{equation}
%\begin{equation}\label{ch43GT}
%\dfrac{1}{2}T^{+}u+T^{+}\mathcal{R}u-\mathcal{W'}T^{+}u+\mathcal{L}^{+}\gamma^{+}u=T^{+}\mathcal{P}f,\hspace{0.5em}on\hspace{0.2em}S.
%\end{equation}

For some distributions $f$, $\Psi$ and $\Phi$, we consider a more general, indirect integral relation associated with the third Green identity \eqref{ch43GV}:

\begin{equation}\label{ch4G3ind}
u+\mathcal{R}u-V\Psi+W\Phi=\mathcal{P}f,\hspace{0.5em}{\rm in\ }\Omega.
\end{equation}

\begin{lemma}\label{ch4lema1}Let $u\in H^{1}(\Omega)$, $f\in L_{2}(\Omega)$, $\Psi\in H^{-\frac{1}{2}}(S)$ and $\Phi\in H^{\frac{1}{2}}(S)$ satisfying the relation \eqref{ch4G3ind}. Then $u$ belongs to $H^{1,0}(\Omega, \mathcal{A})$; solves the equation $\mathcal{A}u=f$ in $\Omega$, and the following identity is satisfied,
\begin{equation}\label{ch4lema1.0}
V(\Psi- T^{+}u) - W(\Phi- \gamma^{+}u) = 0\hspace{0.5em}\text{in}\hspace{0.5em}\Omega.
\end{equation}
\end{lemma}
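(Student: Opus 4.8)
The plan is to establish the three assertions in the order stated: that $u$ solves $\mathcal{A}u=f$, that consequently $u\in H^{1,0}(\Omega;\mathcal{A})$ (since $f\in L_2(\Omega)$), and finally the integral identity \eqref{ch4lema1.0}. The first two claims I would obtain together by applying the operator $\mathcal{A}$ to the relation \eqref{ch4G3ind} and showing that the outcome is exactly $f$; the identity \eqref{ch4lema1.0} will then drop out by comparison with the third Green identity \eqref{ch4green3}.

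First I would note that all terms in \eqref{ch4G3ind} are well defined and lie in $H^1(\Omega)$: Theorem~\ref{ch4thmUR} gives $\mathcal{P}f,\mathcal{R}u\in H^2(\Omega)$ and Theorem~\ref{ch4thmappingVW} gives $V\Psi,W\Phi\in H^1(\Omega)$. Thus \eqref{ch4G3ind} may be read as the $H^1(\Omega)$ identity
\[
u=\mathcal{P}f-\mathcal{R}u+V\Psi-W\Phi,
\]
and I would compute $\mathcal{A}u$ from this representation in the distributional sense, splitting $\mathcal{A}v=a\,\Delta v+\nabla a\cdot\nabla v$. Passing to the Laplace potentials through \eqref{ch4relP}, \eqref{ch4relR}, \eqref{ch4relSL} and \eqref{ch4relDL}, and using the classical identities $\Delta\mathcal{P}_\Delta\sigma=\sigma$ in $\Omega$ together with the harmonicity $\Delta V_\Delta\sigma=\Delta W_\Delta\sigma=0$ in $\Omega$, the $a\,\Delta(\cdot)$ contributions reduce to $a\,\Delta\mathcal{P}f=f$, $a\,\Delta V\Psi=a\,\Delta W\Phi=0$ and $a\,\Delta\mathcal{R}u=\nabla a\cdot\nabla u$, so that the entire $a\,\Delta$-part of the right-hand side equals $f-\nabla a\cdot\nabla u$ (the minus arising from the $-\mathcal{R}u$ term).

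The remaining $\nabla a\cdot\nabla(\cdot)$ parts I would not evaluate one by one; by linearity they reassemble into $\nabla a\cdot\nabla\big(\mathcal{P}f-\mathcal{R}u+V\Psi-W\Phi\big)=\nabla a\cdot\nabla u$, invoking the displayed representation of $u$ once more. Adding the two groups gives
\[
\mathcal{A}u=\big(f-\nabla a\cdot\nabla u\big)+\nabla a\cdot\nabla u=f,
\]
whence $\mathcal{A}u=f\in L_2(\Omega)$ and $u\in H^{1,0}(\Omega;\mathcal{A})$. With this membership secured, the third Green identity \eqref{ch4green3} applies to $u$, and since $\mathcal{A}u=f$ it reads $u+\mathcal{R}u-VT^{+}u+W\gamma^{+}u=\mathcal{P}f$. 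Subtracting \eqref{ch4G3ind} cancels the $u$, $\mathcal{R}u$ and $\mathcal{P}f$ terms and leaves precisely $V(\Psi-T^{+}u)-W(\Phi-\gamma^{+}u)=0$, which is \eqref{ch4lema1.0}.

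I expect the main obstacle to be the single computation $a\,\Delta\mathcal{R}u=\nabla a\cdot\nabla u$, since this is where the specific form of the remainder of the parametrix $P^{x}$ enters: from \eqref{ch4relR} one finds $\Delta\mathcal{R}u=\nabla\cdot(u\,\nabla\ln a)-u\,\Delta\ln a=\nabla u\cdot\nabla\ln a$, with the second-order terms cancelling, and multiplication by $a$ yields $\nabla a\cdot\nabla u$. This must be justified at the level of distributions, which is legitimate because $u\in H^1(\Omega)$ and $a\in\mathcal{C}^{\infty}(\overline{\Omega})$ keep all products in $L_2(\Omega)$. The conceptual subtlety to respect is that, in contrast with a genuine fundamental solution, the parametrix-based potentials $V\Psi$ and $W\Phi$ are \emph{not} $\mathcal{A}$-harmonic; only their $a\,\Delta(\cdot)$ parts vanish, and the surviving first-order terms are exactly what the hypothesis \eqref{ch4G3ind} absorbs. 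Carefully tracking which contributions are annihilated by $\Delta$ and which are recombined through the representation of $u$ is the heart of the argument.
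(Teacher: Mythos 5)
Your proof is correct and follows the standard route: the paper itself only cites the word-for-word identical 3D argument in \cite{carlos2}, which proceeds exactly as you do — apply $\mathcal{A}$ to \eqref{ch4G3ind} using the relations \eqref{ch4relP}--\eqref{ch4relDL} to the Laplace potentials to get $\mathcal{A}u=f$, then subtract the third Green identity \eqref{ch4green3} to obtain \eqref{ch4lema1.0}. Your explicit verification that $a\,\Delta\mathcal{R}u=\nabla a\cdot\nabla u$ via \eqref{ch4relR} is the right key computation and is carried out correctly.
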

\begin{proof}
The proof follows word for word the corresponding proof in 3D case in \cite{carlos2}.
\end{proof}

\begin{lemma}\label{ch4lemma2}
Let either $\Psi^{*} \in H^{-\frac{1}{2}}(S)$ and $\mbox{diam}(\Omega)<1,$ or $\Psi^{*} \in H_{*}^{-\frac{1}{2}}(S)$. If
\begin{equation}\label{ch4lema2i}
V\Psi^{*}(y) = 0,\hspace{2em}y\in\Omega
\end{equation}
then $\Psi^{*}(y) = 0$.
\end{lemma}
\begin{proof} Taking the trace of \eqref{ch4lema2i} gives:
\begin{center}
$\mathcal{V}\Psi^{*}(y) = \mathcal{V}_{\triangle}\left(\dfrac{\Psi^{*}}{a}\right)(y) = 0, \hspace{2em}y\in\Omega$,
\end{center}
If $\Psi^{*} \in H^{-\frac{1}{2}}(S)$ and $\mbox{diam}(\Omega)<1,$ then the result follows from invertibility of the single layer potential given by Theorem \ref{ch4thinv2DV2}. On the other hand, if $\Psi^{*} \in H_{*}^{-\frac{1}{2}}(S),$ then the result is implied by Theorem \ref{ch4thinv2DV1}.   
\end{proof}

\section{BDIE system for the Dirichlet problem}
We aim to obtain a segregated boundary-domain integral equation system for Dirichlet BVP \eqref{ch4BVP}. 
Let us denote the unknown conormal derivative as $\psi:=T^{+}u\in H^{-\frac{1}{2}}(S)$ and we will further consider $\psi$ as formally independent of $u$ in $\Omega$.

To obtain one of the possible boundary-domain integral equation systems we employ identity \eqref{ch43GV} in the domain $\Omega$, and identity \eqref{ch43GG} on $S$, substituting there the Dirichlet condition and $T^{+}u = \psi$ and further considering the unknown function $\psi$ as formally independent (segregated) of $u$ in $\Omega$. Consequently, we obtain the following system (A1) of two equations for two unknown functions,
\begin{subequations}
\begin{align}
u+\mathcal{R}u-V\psi&=F_{0}\hspace{2em}in\hspace{0.5em}\Omega,\label{ch4SM12v}\\
\gamma^{+}\mathcal{R}u-\mathcal{V}\psi&=\gamma^{+}F_{0}-\varphi_{0}\label{ch4SM12g}\hspace{2em}on\hspace{0.5em}S,
\end{align}
\end{subequations}
where
\begin{equation}\label{ch4F0term}
F_{0}=\mathcal{P}f-W\varphi_{0}.
\end{equation}

 We remark that $F_{0}$ belongs to the space $H^{1}(\Omega)$ in virtue of the mapping properties of the surface and volume potentials, see Theorems \ref{ch4thmUR} and \ref{ch4thmappingVW}.

The system (A1), given by \eqref{ch4SM12v}-\eqref{ch4SM12g} can be written in matrix notation as 
\begin{equation*}
\mathcal{A}^{1}\mathcal{U}=\mathcal{F}^{1},
\end{equation*}
where $\mathcal{U}$ represents the vector containing the unknowns of the system,
\begin{equation*}
\mathcal{U}=(u,\psi)^{\top}\in H^{1}(\Omega)\times H^{-\frac{1}{2}}(S),
\end{equation*}
the right hand side vector is \[\mathcal{F}^{1}:= [ F_{0}, \gamma^{+}F_{0} - \varphi_{0} ]^{\top}\in H^{1}(\Omega)\times H^{\frac{1}{2}}(S),\]
and the matrix operator $\mathcal{A}^{1}$ is defined by:
\begin{equation*}
   \mathcal{A}^{1}=
  \left[ {\begin{array}{ccc}
   I+\mathcal{R} & -V  \\
   \gamma^{+}\mathcal{R} & -\mathcal{V} 
  \end{array} } \right].
\end{equation*}

We note that the mapping properties of the operators involved in the matrix imply the continuity of the operator $
    \mathcal{A}^{1}.$

Let us prove that BVP\eqref{ch4BVP} in $\Omega$ is equivalent to the system of BDIEs \eqref{ch4SM12v}-\eqref{ch4SM12g}.
\begin{theorem}\label{ch4EqTh}
Let $f\in L_{2}(\Omega)$ and $\varphi_{0}\in H^{\frac{1}{2}}(S)$. 
\begin{enumerate}
\item[i)] If some $u\in H^{1}(\Omega)$ solves the BVP \eqref{ch4BVP}, then the pair $(u, \psi)^{\top}\in H^{1}(\Omega)\times H^{-\frac{1}{2}}(S)$ where
\begin{equation}\label{ch4eqcond}
\psi=T^{+}u,\hspace{2em}on\hspace{0.5em}S,
\end{equation}
solves the BDIE system (A1). 

\item[ii)] If a couple $(u, \psi)^{\top}\in H^{1}(\Omega)\times H^{-\frac{1}{2}}(S)$ solves the BDIE system (A1), and $\mbox{diam}(\Omega)<1,$ then $u$ solves the BVP and the functions $\psi$ satisfy \eqref{ch4eqcond}.

\item[iii)] The system (A1) is uniquely solvable. 
\end{enumerate}
\end{theorem}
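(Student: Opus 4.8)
\textbf{Proof plan for Theorem \ref{ch4EqTh}.}

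The plan is to establish the three assertions in order, exploiting the third Green identity and the auxiliary Lemmas \ref{ch4lema1} and \ref{ch4lemma2}. For part (i), I would start from a solution $u\in H^{1}(\Omega)$ of the BVP \eqref{ch4BVP}. Since $\mathcal{A}u=f\in L_{2}(\Omega)$, we have $u\in H^{1,0}(\Omega;\mathcal{A})$, so the canonical conormal derivative $\psi:=T^{+}u$ is well defined in $H^{-\frac{1}{2}}(S)$. The third Green identity \eqref{ch43GV} together with its trace \eqref{ch43GG} then yield \eqref{ch4SM12v}-\eqref{ch4SM12g} directly upon substituting $\gamma^{+}u=\varphi_{0}$ and the definition \eqref{ch4F0term} of $F_{0}$; this is essentially a bookkeeping step with no real obstacle.

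For part (ii), I would take a solution $(u,\psi)^{\top}$ of system (A1) and run the argument in reverse. Equation \eqref{ch4SM12v} has exactly the form of the indirect relation \eqref{ch4G3ind} with $\Psi=\psi$, $\Phi=\varphi_{0}$ and right-hand side $F_{0}=\mathcal{P}f-W\varphi_{0}$; hence Lemma \ref{ch4lema1} applies and gives that $u\in H^{1,0}(\Omega;\mathcal{A})$, that $\mathcal{A}u=f$ in $\Omega$, and that the identity \eqref{ch4lema1.0}, namely $V(\psi-T^{+}u)-W(\varphi_{0}-\gamma^{+}u)=0$, holds in $\Omega$. To recover the Dirichlet condition I would take the trace of \eqref{ch4SM12v} and subtract \eqref{ch4SM12g}; using the jump relation $\gamma^{+}V=\mathcal{V}$ from Theorem \ref{ch4thjumps}, the volume potential terms cancel and one is left with $\gamma^{+}u=\varphi_{0}$ on $S$, so $u$ solves the BVP. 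With $\gamma^{+}u=\varphi_{0}=\Phi$ now established, the double-layer term in \eqref{ch4lema1.0} vanishes and the identity collapses to $V(\psi-T^{+}u)=0$ in $\Omega$. Applying Lemma \ref{ch4lemma2} in the case $\mbox{diam}(\Omega)<1$ (which guarantees invertibility of the single layer potential via Theorem \ref{ch4thinv2DV2}) forces $\psi-T^{+}u=0$, i.e. $\psi=T^{+}u$, which is \eqref{ch4eqcond}.

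For part (iii), the unique solvability, I would argue via uniqueness plus existence. Uniqueness follows from parts (i)-(ii): if the homogeneous system (A1) (with $f=0$, $\varphi_{0}=0$, so $\mathcal{F}^{1}=0$) had a nonzero solution $(u,\psi)^{\top}$, then by part (ii) $u$ would solve the homogeneous BVP, whence $u=0$ by the uniqueness in Theorem \ref{ch4Thomsol}, and then $\psi=T^{+}u=0$ by \eqref{ch4eqcond}; so the only solution of the homogeneous system is the trivial one. For existence, given arbitrary data $(f,\varphi_{0})$, Theorem \ref{ch4Thomsol} provides a solution $u$ of the BVP, and part (i) converts it into a solution of (A1) with the prescribed right-hand side $\mathcal{F}^{1}$; hence (A1) is solvable, and by the uniqueness just shown, uniquely so.

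The main subtlety I anticipate is the invertibility requirement hidden in part (ii): the equivalence of $\psi$ with $T^{+}u$ relies on the injectivity of $V$ on $\Omega$, which in two dimensions fails on general domains because $\mathcal{V}_{\Delta}$ may have a nontrivial kernel. This is precisely why the hypothesis $\mbox{diam}(\Omega)<1$ is imposed, so that Theorem \ref{ch4thinv2DV2} (and through it Lemma \ref{ch4lemma2}) delivers the needed injectivity; without this scaling condition one would have to restrict $\psi$ to the subspace $H_{*}^{-\frac{1}{2}}(S)$ and invoke Theorem \ref{ch4thinv2DV1} instead. The remaining steps are routine applications of the jump relations and mapping properties already collected in Theorems \ref{ch4thjumps} and \ref{ch4thmappingVW}.
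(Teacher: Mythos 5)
Your proposal is correct and follows essentially the same route as the paper: part (i) by substituting the boundary data into the third Green identities \eqref{ch43GV}--\eqref{ch43GG}, part (ii) by the trace-subtraction argument combined with Lemmas \ref{ch4lema1} and \ref{ch4lemma2}, and part (iii) by combining Theorem \ref{ch4Thomsol} with parts (i) and (ii). Your treatment of part (iii) and of the role of the hypothesis $\mbox{diam}(\Omega)<1$ is in fact more explicit than the paper's one-line justification.
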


\begin{proof}
First, let us prove item $i)$. Let $u\in H^{1}(\Omega)$ be a solution of the boundary value problem \eqref{ch4BVP} and let $\psi$ be defined by \eqref{ch4eqcond} evidently implies $\psi\in H^{-\frac{1}{2}}(S).$  Then, it immediately follows from the third Green identities \eqref{ch43GV} and \eqref{ch43GG} that the couple $(u, \psi)$ solves BDIE system (A1).

Let us prove now item $ii)$. Let the couple $(u, \psi)^{\top}\in H^{1}(\Omega)\times H^{-\frac{1}{2}}(S)$ solve the BDIE system (A1). Taking the trace of the equation \eqref{ch4SM12v} and substract it from the equation \eqref{ch4SM12g}, we obtain
\begin{equation}\label{ch4M12a1}
\gamma^{+}u=\varphi_{0}, \hspace{1em} \text{on}\hspace{0.5em}S.
\end{equation}
Thus, the Dirichlet boundary condition in \eqref{ch4BVP2} is satisfied. 

We proceed using the Lemma \ref{ch4lema1} in the first equation of the system (A1), \eqref{ch4SM12v}, which implies that $u$ is a solution of the equation \eqref{ch4BVP1} and also the following equality:
\begin{equation*}\label{ch4M12a2}
V(\psi - T^{+}u) - W(\varphi_{0} -\gamma^{+}u) = 0 \text{ in } \Omega.
\end{equation*}
By virtue of \eqref{ch4M12a1}, the second term of the previous equation vanishes. Hence,
\begin{equation*}\label{ch4M12a3}
V(\psi - T^{+}u)= 0, \quad \text{ in } \Omega.
\end{equation*}
 Lemma \ref{ch4lemma2} then implies 
\begin{equation}\label{ch4M12a4}
\psi = T^{+}u,\quad\text{ on } S.
\end{equation}

Item $iii)$ immediately follows from the uniqueness of the solution of the Dirichlet boundary value problem Theorem \ref{ch4Thomsol}.
\end{proof}

\begin{lemma}\label{ch4remark}$(F_{0}, \gamma^{+}F_{0}-\varphi_{0})=0$ if and only if $(f, \varphi_{0})=0$
\end{lemma}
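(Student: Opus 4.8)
The plan is to prove the two implications separately, the forward one being immediate and the converse carrying the only genuine content, namely the injectivity of the volume potential.

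For the ``if'' direction, I would suppose $(f,\varphi_0)=0$, i.e.\ $f=0$ in $\Omega$ and $\varphi_0=0$ on $S$. Since both $\mathcal{P}$ and $W$ are linear, the definition \eqref{ch4F0term} immediately gives $F_0=\mathcal{P}f-W\varphi_0=0$ in $\Omega$, whence also $\gamma^{+}F_0-\varphi_0=0$ on $S$. Thus $(F_0,\gamma^{+}F_0-\varphi_0)=0$, with nothing further required.

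For the converse I would start from the assumption $F_0=0$ in $\Omega$ together with $\gamma^{+}F_0-\varphi_0=0$ on $S$. Taking the trace of the first identity yields $\gamma^{+}F_0=0$, and substituting this into the second gives $\varphi_0=\gamma^{+}F_0=0$, which disposes of the boundary datum. With $\varphi_0=0$ in hand, the definition \eqref{ch4F0term} collapses to $F_0=\mathcal{P}f=0$ in $\Omega$, and it remains to deduce $f=0$ from $\mathcal{P}f=0$.

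This injectivity step is where I expect the only real obstacle to lie, though it is a mild one. Here I would invoke relation \eqref{ch4relP} to rewrite $\mathcal{P}f=\mathcal{P}_{\Delta}(f/a)=0$, thereby reducing the question to the classical logarithmic Newtonian potential. Since $P_{\Delta}$ is the fundamental solution of the Laplacian, one has $\Delta\,\mathcal{P}_{\Delta}g=g$ in $\Omega$ for $g\in L^{2}(\Omega)$; the application of $\Delta$ is legitimate in $L^{2}$ because $\mathcal{P}$ maps $L^{2}(\Omega)$ into $H^{2}(\Omega)$ by Theorem \ref{ch4thmUR} with $s=0$. Applying $\Delta$ to $\mathcal{P}_{\Delta}(f/a)=0$ therefore gives $f/a=0$ in $\Omega$, and since $a\in\mathcal{C}^{\infty}(\overline{\Omega})$ is strictly positive we conclude $f=0$. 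Combined with $\varphi_0=0$ this yields $(f,\varphi_0)=0$, completing the converse. Once it is recognised that \eqref{ch4relP} converts the injectivity of $\mathcal{P}$ into that of $\mathcal{P}_{\Delta}$, which follows from $\Delta\mathcal{P}_{\Delta}=\mathrm{Id}$, the remainder is routine bookkeeping with traces.
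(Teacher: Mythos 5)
Your proof is correct, and both directions match what the paper needs, but your treatment of the converse takes a genuinely different route from the paper's. The paper reads the hypothesis $F_0=\mathcal{P}f-W\varphi_0=0$ as a degenerate instance of the indirect integral relation \eqref{ch4G3ind} (with $u=0$, $\Psi=0$, $\Phi=\varphi_0$) and invokes Lemma \ref{ch4lema1} to conclude $f=\mathcal{A}(0)=0$ and $W\varphi_0=0$ in $\Omega$, before extracting $\varphi_0=0$ from the second component. You instead dispose of the boundary datum first by pure trace arithmetic ($\gamma^{+}F_0=0$ plus the second component gives $\varphi_0=0$), which reduces the problem to the injectivity of $\mathcal{P}$ on $L^{2}(\Omega)$; you then establish that directly via \eqref{ch4relP} and the identity $\Delta\mathcal{P}_{\Delta}g=g$ in $\Omega$, using the strict positivity of $a$. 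Both arguments are sound and ultimately rest on the same parametrix property $\mathcal{A}_x P(x,y)=\delta(x-y)+R(x,y)$, but yours is more self-contained and elementary for this special case: it avoids citing Lemma \ref{ch4lema1} (whose proof the paper only references to the 3D literature) and makes the injectivity of the Newtonian potential explicit, at the cost of redoing in miniature what Lemma \ref{ch4lema1} already packages. The paper's route has the advantage of uniformity, reusing machinery that is needed anyway for the equivalence theorem.
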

\begin{proof}
Indeed the latter equality evidently implies the former, i.e., if $(f, \varphi_{0})=0$ then $(F_{0}, \gamma^{+}F_{0}-\varphi_{0})=0$. Conversely, supposing that $(F_{0}, \gamma^{+}F_{0}-\varphi_{0})=0$, then taking into account equation \eqref{ch4F0term} and applying Lemma \ref{ch4lema1} with $F_{0}=0$ as $u$, we deduce that $f=0$ and $W\varphi_{0}=0$ in $\Omega$. Now, the second equality, $\gamma^{+}F_{0}-\varphi_{0}=0$, implies that $\varphi_{0}=0$ on $S.$ 
\end{proof}

\begin{theorem}
If $\mbox{diam}(\Omega)<1,$ then the operators are invertible, 
\begin{equation}
\mathcal{A}^{1}: H^{1}(\Omega)\times H^{-\frac{1}{2}}(S)\to H^{1}(\Omega)\times H^{\frac{1}{2}}(S) \label{ch4M12a5}
\end{equation}
\begin{equation}
\hspace{1.5cm}\mathcal{A}^{1}: H^{1,0}(\Omega ;A)\times H^{-\frac{1}{2}}(S)\to H^{1,0}(\Omega ;A)\times H^{\frac{1}{2}}(S)\label{ch4M12a6}
\end{equation}
\end{theorem}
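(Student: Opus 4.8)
The plan is to show that $\mathcal{A}^1$ is a Fredholm operator of index zero and is injective; invertibility then follows at once, since an injective Fredholm operator of index zero is automatically surjective, and its inverse is bounded by the open mapping theorem. First I would split the matrix operator as $\mathcal{A}^1 = \mathcal{A}^1_0 + \mathcal{A}^1_c$, where
\[
\mathcal{A}^1_0 = \begin{bmatrix} I & -V \\ 0 & -\mathcal{V} \end{bmatrix}, \qquad \mathcal{A}^1_c = \begin{bmatrix} \mathcal{R} & 0 \\ \gamma^{+}\mathcal{R} & 0 \end{bmatrix}.
\]
The upper-triangular operator $\mathcal{A}^1_0$ is invertible precisely under the hypothesis $\mathrm{diam}(\Omega)<1$: its diagonal blocks are the identity on $H^{1}(\Omega)$ and the single layer operator $-\mathcal{V}:H^{-\frac12}(S)\to H^{\frac12}(S)$, which is invertible by Theorem \ref{ch4thinv2DV2}, while the off-diagonal block $-V:H^{-\frac12}(S)\to H^{1}(\Omega)$ is continuous by Theorem \ref{ch4thmappingVW}; the inverse of $\mathcal{A}^1_0$ is then read off in closed upper-triangular form from these two inverses.

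Next I would show $\mathcal{A}^1_c$ is compact. Taking $s=1>\tfrac12$, Corollary \ref{ch4corcompact} gives the compactness of $\mathcal{R}:H^{1}(\Omega)\to H^{1}(\Omega)$ and of $\gamma^{+}\mathcal{R}:H^{1}(\Omega)\to H^{\frac12}(S)$, and since the second column of $\mathcal{A}^1_c$ vanishes, $\mathcal{A}^1_c$ is compact as a map $H^{1}(\Omega)\times H^{-\frac12}(S)\to H^{1}(\Omega)\times H^{\frac12}(S)$. Being a compact perturbation of the invertible operator $\mathcal{A}^1_0$, the operator $\mathcal{A}^1$ is therefore Fredholm of index zero.

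For injectivity I would take $\mathcal{U}=(u,\psi)^{\top}$ with $\mathcal{A}^1\mathcal{U}=0$, i.e. a solution of the homogeneous system. By Lemma \ref{ch4remark} the vanishing right-hand side forces $f=0$ and $\varphi_{0}=0$; since $\mathrm{diam}(\Omega)<1$, item (ii) of the equivalence Theorem \ref{ch4EqTh} then shows that $u$ solves the homogeneous Dirichlet problem and that $\psi=T^{+}u$. Uniqueness for the BVP (Theorem \ref{ch4Thomsol}) gives $u=0$, whence $\psi=T^{+}u=0$. Thus $\mathcal{A}^1$ is injective, and together with the index-zero property this establishes invertibility of the mapping \eqref{ch4M12a5}.

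Finally, for the mapping \eqref{ch4M12a6} I would transfer the result to the energy space. The key regularity observation is that $V\psi=V_{\Delta}(\psi/a)$ is built from a harmonic Laplace single layer potential, so $\mathcal{A}V\psi=\nabla a\cdot\nabla V_{\Delta}(\psi/a)\in L_{2}(\Omega)$ by the smoothness of $a$ and the fact that $V\psi\in H^{1}(\Omega)$; hence $V\psi\in H^{1,0}(\Omega;\mathcal{A})$. Combined with $\mathcal{R}:H^{1}(\Omega)\to H^{2}(\Omega)\hookrightarrow H^{1,0}(\Omega;\mathcal{A})$ from Theorem \ref{ch4thmUR}, this shows $\mathcal{A}^1$ maps $H^{1,0}(\Omega;\mathcal{A})\times H^{-\frac12}(S)$ continuously into $H^{1,0}(\Omega;\mathcal{A})\times H^{\frac12}(S)$; moreover, reading the first equation as $u=\mathcal{F}_{1}-\mathcal{R}u+V\psi$, the unique preimage supplied by the invertibility of \eqref{ch4M12a5} automatically satisfies $u\in H^{1,0}(\Omega;\mathcal{A})$ whenever the data lie in $H^{1,0}(\Omega;\mathcal{A})\times H^{\frac12}(S)$. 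Injectivity is inherited from the larger space, so \eqref{ch4M12a6} is invertible as well. I expect the main obstacle to be exactly this last regularity bookkeeping, namely verifying that the single layer term and the data genuinely belong to $H^{1,0}(\Omega;\mathcal{A})$ so that the inverse respects the energy-space structure, whereas the Fredholm-plus-injectivity core is routine once Theorems \ref{ch4thinv2DV2} and \ref{ch4EqTh} and Corollary \ref{ch4corcompact} are in hand.
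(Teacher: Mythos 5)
Your proposal is correct and follows essentially the same route as the paper: the same splitting $\mathcal{A}^{1}=\mathcal{A}^{1}_{0}+\mathcal{A}^{1}_{c}$ with the upper-triangular part invertible via Theorem \ref{ch4thinv2DV2}, compactness of the remainder terms from Corollary \ref{ch4corcompact}, Fredholm index zero plus injectivity from the equivalence theorem, and Lemma \ref{ch4lema1} to transfer invertibility to the space $H^{1,0}(\Omega;\mathcal{A})\times H^{-\frac{1}{2}}(S)$. Your explicit verification that $V\psi$ and $\mathcal{R}u$ land in $H^{1,0}(\Omega;\mathcal{A})$ is a harmless elaboration of what the paper delegates to Lemma \ref{ch4lema1}.
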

\begin{proof}
To prove the invertibility of operator \eqref{ch4M12a5}, let $\mathcal{A}_{0}^{1}$ be the matrix operator defined by
\begin{equation*}\label{ch4M012}
  \mathcal{A}_{0}^{1}:=
  \left[ {\begin{array}{ccc}
   I & -V  \\
   0 & -\mathcal{V} \\
  \end{array} } \right].
\end{equation*}
As a result of compactness properties of the operators $\mathcal{R}$ and $\gamma^{+}\mathcal{R}$ (cf. Corollary \ref{ch4corcompact}), the operator $\mathcal{A}_{0}^{1}$ is a compact perturbation of operator $\mathcal{A}^{1}$. The operator  $\mathcal{A}_{0}^{1}$ is an upper triangular matrix operator and invertibility of its diagonal operators $I:H^{1}(\Omega)\longrightarrow H^{1}(\Omega)$ and $\mathcal{V}:H^{-\frac{1}{2}}(\partial\Omega)\longrightarrow H^{\frac{1}{2}}(\partial\Omega)$ (cf. Theorem \ref{ch4thinv2DV2}). This implies that 
\begin{center}
$\mathcal{A}_{0}^{1}: H^{1}(\Omega)\times H^{-\frac{1}{2}}(S)\longrightarrow H^{1}(\Omega)\times H^{\frac{1}{2}}(S)$
\end{center}
is an invertible operator. Thus $\mathcal{A}^{1}$ is a Fredholm operator with zero index. Hence the Fredholm property and the injectivity of the operator $\mathcal{A}^{1}$, provided by item $iii)$ of Lemma \ref{ch4remark}, imply the invertibility of operator $\mathcal{A}^{1}$.

To prove invertibility of operator \eqref{ch4M12a6}, we remark that for any $\mathcal{F}^{1}\in H^{1,0}(\Omega ;A)\times H^{\frac{1}{2}}(S)$ a solution of the equation $\mathcal{A}^{1}\mathcal{U}=\mathcal{F}^{1}$ can be written as $\mathcal{U}=\left(\mathcal{A}^{1}\right)^{-1}\mathcal{F}^{1}$, where $
\left(\mathcal{A}^{1}\right)^{-1} : H^{1}(\Omega)\times H^{\frac{1}{2}}(\partial\Omega)\to H^{1}(\Omega)\times H^{-\frac{1}{2}}(\partial\Omega)$
is the continuous inverse to operator \eqref{ch4M12a5}. But due to Lemma \ref{ch4lema1} the first equation of system (A1) implies that $\mathcal{U}=\left(\mathcal{A}^{1}\right)^{-1}\mathcal{F}^{1}\in H^{1,0}(\Omega ;A)\times H^{-\frac{1}{2}}(S)$ and moreover, the operator
$\left(\mathcal{A}^{1}\right)^{-1} : H^{1,0}(\Omega ;A)\times H^{\frac{1}{2}}(S)\to H^{1,0}(\Omega ;A)\times H^{-\frac{1}{2}}(S)$
is continuous, which implies invertibility of operator \eqref{ch4M12a6}.
\end{proof}

\section{Conclusions}
In this paper, we have considered a new parametrix for the Dirichlet problem with variable coefficient in two-dimensional domain, where the right hand side function is from $L_{2}(\Omega)$ and the Dirichlet data from the space $H^{\frac{1}{2}}(S).$  A BDIEs for the original BVP has been obtained. Results of equivalence between the BDIES and the BVP has been shown along with the invertibility of the matrix operator defining the BDIES. 

Now, we have obtained an analogous system to the BDIES (A1) of \cite{mikhailov1,carlos2} with a new family of parametrices which is uniquely solvable. Hence, further investigation about the numerical advantages of using one family of parametrices over another will follow.

Analogous results could be obtain for exterior domains following a similar approach as in \cite{exterior}. 

Further generalised results for Lipschitz domains and non-smooth coefficient can also be obtain by using the generalised canonical conormal derivative operator defined in \cite{traces, mikhailovlipschitz}.

\paragraph{Z. W. Woldemicheal$^1$, C.F. Portillo$^2$,\vspace{5pt}\\}
%% For a single institution do not use superscript numbering of author(s) and use the address format below:
%\begin{tabular}{l}
%{Department of Something,}  \\
%{University of Somewhere} \\ 
%{Address line 1} \\ 
%{Address line 2} 
%\end{tabular}
%% For multiple institutions use numbered superscripts for authors and their corresponding institutional details:
\begin{tabular}{ll}
$^1$ & {Department of Mathematics}  \\
&{Addis Ababa University} \\  
&{Ethiopia}\\
$^2$ & {School of Engineering, Computing and Mathematics}  \\
&{Oxford Brookes University} \\ 
&{UK } 
\end{tabular}


\begin{thebibliography}{99}
%
%\bibitem{amrouche} Amrouche C., Girault V., and Giroire J.: Dirichlet and Neumann exterior problems for the n-dimensional Laplace operator. An approach in weighted Sobolev spaces, {\it J. Math. Pures Appl.} \textbf{76} (1997) 55-81. 


\bibitem{mixed2d} Ayele T.G., Dufera T.T., Mikhailov S.E.: Analysis of Boundary-Domain Integral Equations for Variable-Coefficient Mixed BVP in 2D, \textit{Analysis, Probability, Applications, and Computation}, K.O. Lindahl et al. (eds.), Springer Nature Switzerland AG,(2019) 467-480. 

\bibitem{neumann2d} Ayele T.G., Dufera T.T., Mikhailov S.E.: Analysis of boundary-domain integral equations for variable-coefficient Neumann BVP in 2D, \textit{in: Integral Methods in Science and Engineering, Vol.1 Theoretical Techniques}, C. Constanda et al, eds. Springer (Birkhäuser): Boston (2019), Chapter 3, 21-33.

\bibitem{2dnumerics} Beshley A., Chapko R., Johansson B.T., An integral equation method for the numerical solution of a Dirichlet problem for second-order elliptic equations with variable coefficients, {\it J Eng Math}, (2018), \textbf{112}, 63-73. 

\bibitem{chapko} Chapko R., Johansson B.T.: A boundary integral equation method for numerical solution of parabolic and hyperbolic Cauchy problems, {\it Appl. Numer. Math}, (2018), \textbf{129}, 104-119. 
%
%\bibitem{mitra1}Brown R., Mitrea I.,  Mitrea M. and  Wright M.: Mixed boundary value problems for the Stokes system.
%
%\bibitem{maxwell}
%Bossavit A.: 
%{\it \'Electromagn\'etisme, en vue de la mod\'elisation},
%Springer, Berlin (2003).
%
%\bibitem{bony}
% Bony J.M.: 
%{\it \em Course d'Analyse, Th\'{e}rie des Distributions et Analyse de Fourier}, Les \'{E}ditions de l'\'{E}cole Polyt\'{e}chnique, Palaiseau (2010).
%
%\bibitem{brezis} Brezis H.:{\it \em An\'{a}lisis Funcional}, Alianza Editorial, Madrid (1984).
%
%\bibitem{dautray} R.Dautray and J. Lions,  {\it \em Mathematical Analysis and Numerical Methods for Science and Technology}, Integral Equations and Numerical Methods, Vol. 4, (Springer, 1990).

\bibitem{mikhailov1}
Chkadua, O., Mikhailov, S.E. and Natroshvili, D.:
Analysis of direct boundary-domain integral equations for a mixed BVP with variable coefficient, I: Equivalence and invertibility.
{\em J. Integral Equations and Appl.} {\bf 21}, 499-543 (2009).

\bibitem{miksolandreg}
Chkadua, O., Mikhailov, S.E. and Natroshvili, D.:
Analysis of direct boundary-domain integral equations for a mixed BVP with variable coefficient, II: Solution regularity and asymptotics.
{\em J. Integral Equations and Appl.} , Vol.22, {\bf 1}, 19-37 (2010).

\bibitem{exterior} Chkadua, O., Mikhailov, S.E. and Natroshvili, D.: Analysis of direct segregated boundary-domain integral equations for variable-coefficient mixed BVPs in exterior domains, {\em Analysis and Applications}, Vol.11,\textbf{4}(2013).

\bibitem{costabel}
Costabel, M.:  Boundary integral operators on Lipschitz domains: Elementary results.
{\em SIAM J. Math. Anal.} {\bf 19}, 613-626 (1988).

\bibitem{costste}
Costabel M., Stephan E.P.: {An improved boundary element Galerkin method for three dimensional crack problems}
{\em J. Integral Equations Operator Theory} {\bf 10}, 467-507, (1987).

\bibitem{costanda}
Costanda, C.:  {Direct and indirect boundary integral equations methods.} Chapman \& Hall/CRC (1988).

\bibitem{dufera}
Dufera T.T., Mikhailov S.E.: {Analysis of Boundary-Domain Integral Equations for Variable-Coefficient Dirichlet BVP in 2D} In: Integral Methods in Science and Engineering: Theoretical and Computational Advances.  C. Constanda and A. Kirsh, eds., Springer (Birkhäuser): Boston, (2015), 163-175.
%
%\bibitem{eskin}
%Eskin G.: {\em Boundary value problems for elliptic pseudodifferential equations} {\bf 52}{\em American Mathematical Society}(1981).
%
%\bibitem{fabes}  Fabes E.B., Kenig C.E. and Verchota G.C.: The Dirichlet problem for the Stokes system on Lipschitz domains. {\em Duke Mathematical Journal}, Vol.57 \textbf{3}, 769-793.
%
%\bibitem{girault}  Girault V. and Sequeira A.: A well posed problem for the exterior Stokes equations in two and three dimensions. {\em Arch. Rational Mech. Anal.} 114 (1991) 313-333. 
%
%\bibitem{giroire1}  Giroire J.: Étude  de quelques probl\`{e}mes  aux limites ext\'{e}rieurs et résolution par équations intégrales, Thése de Doctorat d'État,  Universit\'{e} Pierre-et-Marie-Curie (Paris VI) (1987).
%
%\bibitem{giroire2}  Giroire J. and  Nedelec J.: Numerical solution of an exterior Neumann problem using a double layer potential, {\em Math. Comp.} \textbf{32} (1978) 973-990.
%
%\bibitem{vall}  Galindo Soto F.,  Sanz Gil J. and Trist\'{a}n Vega L.A.: { Gu\'{i}a Pr\'{a}ctica de C\'{a}lculo Infinitesimal en Varias Variables}. Thomson, Madrid (2005).
%
\bibitem{numerics}  Grzhibovskis R.,  Mikhailov S.E. and Rjasanow S.: Numerics of boundary-domain integral 
and integro-differential equations for BVP with variable coefficient in 3D, {\em Computational Mechanics}, {\bf 51}, 495-503 (2013).
%
%\bibitem{hanouzet}  Hanouzet B.:  Espaces de Sobolev avec Poids. Application au Probleme de Dirichlet Dans un Demi Espace, {\em Rendiconti del Seminario Matematico della Universita di Padova}, \textbf{46} , (1971), 227-272.
%
\bibitem{hsiao}
Hsiao G.C. and Wendland W.L.:
{Boundary Integral Equations.}
Springer, Berlin (2008).

%\bibitem{hsiao2}  Hsiao G.C., Schnack E. and  Wendland W.L.:{\em Hybrid Coupled Finite-Boundary Element Methods for Elliptic Systems of Second Order}, Computer methods in applied mechanics and engineering, Vol. 190, \textbf{5} Elsevier, 2000, 431-485.
%
%\bibitem{mondy}  Ingber M.S. and Mondy L.A.: {\em Direct Second Kind Boundary Integral Formulation for Stokes Flow Problems}. Computational Mechanics, (1993), \textbf{11}, 11-27. 
%
%\bibitem{kohr1}
%Kohr M. and Wendland W.L.:
%Variational boundary integral equations for the Stokes system.
%{\em Applicable Anal.} {\bf 85}, 1343-1372 (2006).
%
%\bibitem{kohrpop}
%Kohr M. and Pop I.:
%Viscous Imcompressible Flow. 
%{\em WIT Press}, Southampton (2004).
%
%\bibitem{kohr2}
%Kohr M. and Wendland W.L.:
%Boundary value problems for the Stokes system in bounded domains in $\mathbb{R}^{n}$.
%{\em J. Computational and Applied Mathematics}, {\bf 201}, 128 – 145 (2007).
%
%\bibitem{all}  Kohr M.,  Lanza de Cristoforis M.,  Mikhailov S.E. and  Wendland W.L.: Transmission problems for the Stokes and Dancy-Forchheimer-Brinkman systems in weighted Sobolev spaces on exterior Lipschitz domains in $\mathbb{R}^{3}$.
%
%\bibitem{ladynes}
%Ladyzhenskaya O.A.:
%{\em The Mathematical Theory of Viscous Incompressible Flow.}
%Gordon \& Breach, New York (1969).
%
%\bibitem{lichtenstein}
%Lichtenstein L.:
%{\"U}ber einige Existenzprobleme der Hydrodynamik
%{ \em Mathematische Zeitschrift}, {\bf  26}, (1927), 196 - 323 .
%
%
%\bibitem{maulen}  M\"{a}ulen J.: L\"{o}sungen der Poissongleichung und harmonishe Vektorfelder in unbershr\"{a}nkten Gebieten, {\em Math. Meth. Appl. Sci.}\textbf{5} 1983, 227-272.

\bibitem{mclean}
McLean W.: {Strongly Elliptic Systems and Boundary Integral Equations.}
Cambridge University Press (2000).
%
%\bibitem{michlin}
%Michlin S.G. and Pr{\"o}ssdorf S.:{\em Singular integral operators},{\bf 68}, Springer (1986).
\bibitem{RIM}
Al-Jawary M.A., Ravnik J., Wrobel L.C.,\u{S}kerget L.: Boundary element formulations for the numerical solution of two-dimensional diffusion problems with variable coefficients. {\em Computers and Mathematics with Applications.},(2012) 2695-2711.

\bibitem{traces}
Mikhailov S.E.:
Traces, extensions and co-normal derivatives for elliptic systems on Lipschitz domains.  {\em J. Math. Anal. and Appl.}, {\bf 378},(2011) 324-342. 
  
%\bibitem{united}  Mikhailov S.E.: Analysis of united boundary-domain integro-differential and integral equations for a mixed BVP with variable coefficient, \emph{Math. Methods Appl. Sci.}, \textbf{29} (2006) 715-739.

\bibitem{mikhailovlipschitz} Mikhailov S.E.: Analysis of Segregated Boundary-Domain Integral Equations for BVPs with Non-smooth Coefficient on Lipschitz Domains, \textit{Boundary Value Problems}, Wol 2018:87, 1-52.
  
\bibitem{localised} Mikhailov S.E.: Localized boundary-domain integral formulations for problems
with variable coefficients, \emph{Engineering Analysis with Boundary
Elements}, \textbf{26} (2002) 681-690.

\bibitem{iterative} Mikhailov S.E.,  Mohamed N.A.: Iterative solution of boundary-domain integral equation for BVP with variable coefficient, in \emph{Proceedings of the 8th UK Conference on Boundary Integral Methods}, ed. D. Lesnic (Leeds University Press, 2011), 127-134.  

\bibitem{carlos3} Mikhailov S.E., Portillo C.F.:  Analysis of boundary-domain integral equations based on a new parametrix for the mixed diffusion BVP with variable coefficient in an interior Lipschitz domain, \textit{J. Integral Equations and Applications},(2018).
  
\bibitem{carlos1} Mikhailov S.E., Portillo C.F.:  Analysis of Boundary-Domain Integral Equations to the Mixed BVP for a compressible stokes system with variable viscosity, {\em Communications on Pure and Applied Analysis}, \textbf{18}(6)(2019): 3059-3088.

\bibitem{carlos2} Portillo C.F.: {\em Boundary-Domain Integral Equations for the diffusion equation in inhomogeneous media based on a new family of parametrices}, in \textit{Complex Variables and Elliptic Equations}, (2019).


%
\bibitem{miranda}  Miranda C.: {Partial Differential Equations of Elliptic Type}, 2nd edn. Springer, (1970).

%\bibitem{mitrawright}  Mitrea M. and  Wright M.: {\em Boundary value problems for the Stokes system in arbitrary Lipschitz domains}. Soci\'{e}t\'{e} Math\'{e}matique de France, (2012).
%
%\bibitem{nedelec2}  Nedelec J. and  Planchard J.: {Une m{\'e}thode variationnelle d’{\'e}l{\'e}ments finis pour la r{\'e}solution num{\'e}rique d’un probl{\`e}me ext{\'e}rieur dans $\mathbb{R}^{3}$}, RAIRO \textbf{7}(R3) (1973), 105-129. 
%
%\bibitem{nedelec3}  Nedelec J.: {\em Acoustic and Electromagnetic Equations}, Applied Mathematical Sciences, Vol. 144, (Springer-Verlag, 2001). 
%
%\bibitem{neto} Neto J.B.: Inhomogeneous boundary value problems in a half space, {\em Annals of Sc. Sup. Pisa}, \textbf{19} (1965) 331-365. 
%
%\bibitem{odqvist}
%Odqvist F.K.: {\"U}ber die Randwertaufgaben der Hydrodynamik Z{\"a}her Fl{\"u}ssigkeiten,
%{\em Math. Z.}, {\bf 32}, (1930), p. 329.
%
%\bibitem{reidinger} Reidinger B. and Steinbach O.: A symmetric boundary element method for the Stokes problem in multiple connected domains. {\em Math. Math. Appl. Sci.} \textbf{26} (2003) 77-93. 
%
%\bibitem{roux}  Le Roux C., Reddy B.D.: {\em The steady Navier-Stokes equations with mixed boundary conditions: application to free boundary flows}. Nonlinear Analysis, Theory, Methods \& Applications, Vol.20, {\bf 9}, 1043-1068. 
%
%\bibitem{rudin} Rudin W.: {\em Functional Analysis}. McGraw-Hill (2006).
%
%\bibitem{navier} Sohr H.: { \em The Navier-Stokes Equations, An Elementary Functional Approach } Birkh{\"a}user (2010).
%
%\bibitem{starita}  Starita G. and  Tartaglione A.: {\em On the traction problem for the Stokes system}. Mathematical Models and Methods in Applied Scienes. Vol.12 \textbf{6} (2002), 813-834.
%
\bibitem{ravnik} Ravnik J., Tibaut J.: Fast boundary-domain integral method for heat transfer simulations. \textit{Engineering Analysis with Boundary Elements}, \textbf{99} (2019), 222-232. 

\bibitem{sladek} Sladek J., Sladek V., Zhang Ch.: Local integro-differential
equations with domain elements for the numerical solution of partial
differential equations with variable coefficients. \textit{Journal of Engineering Mathematics}
\textbf{51}, (2005), 261–282.

\bibitem{steinbach} Steinbach O.:{Numerical Approximation Methods for Elliptic Boundary Value Problems}. Springer (2007).
\end{thebibliography}
\end{document}